%
%
%
%
%

\documentclass[12pt]{amsart}

\usepackage{amsmath}	
\usepackage{amssymb}

\textwidth=6.5in
\hoffset=-0.75in
\textheight=9.2in
\voffset=-0.55in

\newtheorem{theoreme}{Th\'eor\`eme}[section]

\newtheorem{lemme}[theoreme]{Lemme}
\newtheorem{proposition}[theoreme]{Proposition}

\newtheorem{corollaire}[theoreme]{Corollaire}

\newcommand{\bC}{\mathbb{C}}

\newcommand{\bN}{\mathbb{N}}
\newcommand{\bP}{\mathbb{P}}
\newcommand{\bQ}{\mathbb{Q}}
\newcommand{\bR}{\mathbb{R}}
\newcommand{\bZ}{\mathbb{Z}}

\newcommand{\cF}{{\mathcal{F}}}

\newcommand{\cO}{{\mathcal{O}}}

\newcommand{\gA}{\mathfrak{A}}

\newcommand{\Hinf}{H_\infty}
\newcommand{\Hfin}{H_\mathrm{fin}}

\newcommand{\tP}{\tilde{P}}

\newcommand{\ualpha}{\boldsymbol{\alpha}}

\newcommand{\uD}{\mathbf{D}}

\newcommand{\um}{{\mathbf{m}}}
\newcommand{\un}{{\mathbf{n}}}

\newcommand{\ur}{{\mathbf{r}}}

\newcommand{\uu}{\mathbf{u}}
\newcommand{\uU}{\mathbf{U}}
\newcommand{\uun}{\mathbf{1}}
\newcommand{\uw}{\mathbf{w}}

\newcommand{\usX}{\underline{\uX}}
\newcommand{\uX}{\mathbf{X}}

\newcommand{\uz}{\mathbf{z}}


\newcommand{\bCC}{\bC[\usX]_{D_0} \times \cdots \times \bC[\usX]_{D_t}}

\newcommand{\et}{\quad\mbox{et}\quad}
\newcommand{\fleche}{\longrightarrow}
\newcommand{\Ga}{\mathbb{G}_{\mathrm{a}}}
\newcommand{\Gm}{\mathbb{G}_{\mathrm{m}}}
\newcommand{\malpha}{\um\cdot\ualpha}
\newcommand{\mj}{{\substack{\um\in\Sigma(S)\\ 0\le j<T}}}
\newcommand{\nalpha}{\un\cdot\ualpha}

\newcommand{\ou}{\quad\mbox{ou}\quad}
\newcommand{\Qbar}{\overline{\bQ}}
\newcommand{\ralpha}{\ur\cdot\ualpha}
\newcommand{\Res}{\mathrm{Res}}

\DeclareMathOperator{\cont}{cont}

\setlength{\parskip}{4pt}

\begin{document}

\baselineskip=15pt 

\title[Sur le th\'eor\`eme de Lindemann-Weierstrass effectif]
{Une version effective du th\'eor\`eme
de Lindemann-Weierstrass par des m\'ethodes
d'ind\'ependance alg\'ebrique}
\author{Damien ROY}

\address{
   D\'epartement de Math\'ematiques\\
   Universit\'e d'Ottawa\\
   585 King Edward\\
   Ottawa, Ontario K1N 6N5, Canada}
\email{droy@uottawa.ca}
\subjclass[2000]{Primary 11J85; Secondary 11J82}
\thanks{Recherche partiellement support\'ee par le CRSNG}

\renewcommand{\abstractname}{R\'esum\'e}
\begin{abstract}
On pr\'esente une nouvelle d\'emonstration compl\`etement effective du th\'eor\`eme de Lindemann-Weierstrass bas\'ee sur des m\'ethodes d'ind\'ependance alg\'ebrique.  Quoique sensiblement moins bonne que la meilleure estimation connue \`a ce jour, d\^ue \`a A.~Sert, elle am\'eliore toutefois la meilleure estimation, d\^ue \`a M.~Ably, obtenue par ce type de m\'ethode.  La nouveaut\'e de l'argument r\'eside dans la simplicit\'e de la construction de fonctions auxiliaires.  On exploite ce trait pour introduire le non-sp\'ecialiste aux m\'ethodes d'ind\'ependance alg\'ebrique.
\par
\medskip
\noindent
{\sc Abstract.}
We present a new completely effective proof of the Lindemann-Weierstrass theorem based on algebraic independence methods.  Although it is slightly weaker than the best known estimate due to A.~Sert, it improves the best estimate due to M.~Ably obtained by such methods.  The novelty of the proof lies in the simplicity of the construction of auxiliary functions, a fact that we exploit to introduce the non-specialist to methods of algebraic independence.
\end{abstract}

\maketitle

%
%

\section{Introduction}
\label{sec:Intro}

Le th\'eor\`eme de Lindemann-Weierstrass est un des r\'esultats les plus satisfaisants de la th\'eorie des nombres transcendants.  Il peut se formuler de mani\`eres \'equivalentes soit comme un \'enonc\'e d'ind\'ependance lin\'eaire:
\begin{quote}
``Si $\beta_1,\dots,\beta_N \in \bC$ sont des nombres alg\'ebriques distincts, alors $e^{\beta_1},\dots,e^{\beta_N}$ sont lin\'eairement ind\'ependants sur $\bQ$.'',
\end{quote}
ou bien comme un \'enonc\'e d'ind\'ependance alg\'ebrique:
\begin{quote}
``Si $\alpha_1,\dots,\alpha_t \in \bC$ sont des nombres alg\'ebriques lin\'eairement ind\'ependants sur $\bQ$, alors $e^{\alpha_1},\dots,e^{\alpha_t}$ sont alg\'ebriquement ind\'ependants sur $\bQ$.''.
\end{quote}
La preuve originale \cite{LW}, \'etablie par K.~Weierstrass en 1885, repose sur la premi\`ere formulation gr\^ace \`a une extension de la m\'ethode de C.~Hermite \cite{H}.  Elle emploie des syst\`emes d'approximants de Pad\'e simultan\'es pour les fonctions $e^{\beta_1x},\dots,e^{\beta_Nx}$. Des variations de la preuve, utilisant la m\^eme m\'ethode tout en y apportant diff\'erentes simplifications ou des \'eclairages nouveaux, ont \'et\'e propos\'ees par plusieurs auteurs dont D.~Hilbert et K.~Mahler.  L'appendice de \cite{Ma2} en fournit un compte-rendu exhaustif.  On pourra consulter les chapitres 1 de \cite{Ba} ou de \cite{Ne2} pour une d\'emonstration tr\`es succincte du th\'eor\`eme ou encore \cite{W1983} pour une pr\'esentation motiv\'ee de la m\'ethode d'Hermite.

En 1929, C.~L.~Siegel a initi\'e une vaste g\'en\'eralisation de ces recherches en introduisant sa th\'eorie des E-fonctions, compl\'et\'ee par A.~B.~Shidlovski\u{\i} dans une s\'erie de travaux \`a partir de 1954.  Cette th\'eorie et certains des r\'esultats auxquels elle conduit sont expos\'es dans \cite[ch.~4--7]{Ma2}, \cite[ch.~2]{Ne2} et \cite[ch.~11]{Ba}.  N\'eanmoins, il s'agit encore, \`a la base,  de m\'ethodes d'ind\'ependance lin\'eaire.

La premi\`ere d\'emonstration du th\'eor\`eme de Lindemann-Weierstrass bas\'ee sur la seconde formulation, utilisant donc des m\'ethodes d'ind\'ependance alg\'ebrique, a \'et\'e pr\'esent\'ee en 1980 par G.~V.~Chudnovsky dans \cite{Chud} pour au plus trois nombres (i.e.\ pour $t\le 3$). Par une adaptation de cette m\'ethode il obtient aussi un analogue de ce th\'eor\`eme pour la fonction $\wp$ de Weierstrass associ\'ee \`a une courbe elliptique d\'efinie sur $\Qbar$ avec multiplication complexe, mais avec la m\^eme limitation, lev\'ee par la suite par P.~Philippon \cite{Ph1} et G.~W\"ustholz \cite{Wu}.

Le but de ce travail est de donner une nouvelle d\'emonstration simple du th\'eor\`eme de Lindemann-Weierstrass, \`a l'aide de m\'ethodes d'ind\'ependance alg\'ebrique, sous une forme quantitative am\'eliorant celle obtenue par M.~Ably en 1994 par des m\'ethodes semblables \cite{Ably}.  Notre r\'esultat principal, ci-dessous, est toutefois moins pr\'ecis que la meilleure estimation pleinement explicite, d\^ue \`a A.~Sert \cite{Sert}.

\begin{theoreme}
\label{thm:eff}
Soient $\alpha_1,\dots,\alpha_t \in \bC$ des nombres alg\'ebriques lin\'eairement ind\'ependants sur $\bQ$.  Soit $c$ un majorant des valeurs absolues de tous leurs conjugu\'es, soit $q\in\bN$ un entier positif tel que $q\alpha_1, \dots, q\alpha_t$ soient des entiers alg\'ebriques, et soit $d$ le degr\'e de $\bQ(\alpha_1, \dots, \alpha_t)$ sur $\bQ$.  Pour toute paire d'entiers positifs $D$ et $H$ et tout polyn\^ome non nul $P \in \bZ[X_1, \dots, X_t]$ de degr\'e au plus $D$ \`a coefficients entiers en valeur absolue au plus $H$, on a
\[
 |P(e^{\alpha_1},\dots,e^{\alpha_t})| \ge H^{-3dS^t}\exp\left(-(cqS)^{18S^t}\right),
\]
o\`u $S=6dt(t!)D$.
\end{theoreme}

La borne inf\'erieure donn\'ee par le th\'eor\`eme est une \emph{mesure d'ind\'ependance alg\'ebrique}.  Elle implique l'ind\'ependance alg\'ebrique de $e^{\alpha_1},\dots,e^{\alpha_t}$ sur $\bQ$. Le premier r\'esultat de ce type revient \`a Mahler \cite{Ma1} en 1931.  En s'appuyant sur la m\'ethode d'Hermite, il d\'emontre, avec les notations ci-dessus, l'existence de quantit\'es $c_1 = c_1(\alpha_1,\dots,\alpha_t)$ et $H_0=H_0(D,\alpha_1,\dots,\alpha_t)$, non explicit\'ees, telles que $|P(e^{\alpha_1},\dots,e^{\alpha_t})| \ge H^{-c_1D^t}$ si $H\ge H_0$.  En 1977, Yu.~V.~Nesterenko d\'emontre une version quantitative du th\'eor\`eme de Siegel-Shidlovski\u{\i}.  Dans la situation pr\'esente, son r\'esultat \cite[thm.~4]{Ne1} fournit $c_1=(4d)^t(td^2+d+1)$ et montre qu'on peut prendre $H_0=\exp(\exp(c_2D^{2t}\log(D+1)))$ pour une constante $c_2=c_2(\alpha_1,\dots,\alpha_t)$ non explicit\'ee.  Le r\'esultat d'Ably mentionn\'e plus t\^ot fait moins bien au niveau de la constante $c_1$ mais am\'eliore la d\'ependance de $H_0$ en fonction du majorant $D$ du degr\'e de $P$.  Sans entrer dans les d\'etails, disons simplement que, dans les notations du th\'eor\`eme ci-dessus, la minoration qu'il obtient revient \`a prendre $S$ de l'ordre de $d^t 2^{4t^2}D$.  Quant \`a la mesure obtenue par Sert, elle revient plut\^ot \`a prendre $S$ de l'ordre de $dtD$.  Pour y parvenir, ce dernier utilise la remarquable m\'ethode des d\'eterminants d'interpolation de M.~Laurent.  Il obtient d'abord une version quantitative de la premi\`ere forme du th\'eor\`eme de Lindemann-Weierstrass, donc une mesure d'ind\'ependance lin\'eaire, puis, par sp\'ecialisation, il en d\'eduit une mesure d'ind\'ependance alg\'ebrique.  Dans ce contexte, mentionnons qu'\`a la suite du travail de Sert, D.~Bertrand a utilis\'e \`a son tour la m\'ethode des d\'eterminants d'interpolation pour donner une nouvelle d\'emonstration du th\'eor\`eme de Siegel-Shidlovski\u{\i} \cite{Be} qui, pour reprendre les mots de son auteur, jette un pont entre les d\'emonstrations originales de ce th\'eor\`eme et celle plus r\'ecente obtenue par Y.~Andr\'e \cite{An} g\'en\'eralisant l'approche d\'eploy\'ee par J.-P.~B\'ezivin et P.~Robba dans leur preuve ad\'elique du th\'eor\`eme de Lindemann-Weierstrass (version lin\'eaire) \cite{BR}.  Quoique les d\'emonstrations de \cite{An} et de \cite{BR} ne semblent pas conduire ais\'ement \`a des \'enonc\'es quantitatifs, il est probable que la preuve du th\'eor\`eme de Lindemann-Weierstrass que donne D.~Bertrand dans \cite[\S5]{Be} puisse quant \`a elle mener \`a une nouvelle mesure d'ind\'ependance alg\'ebrique des exponentielles de nombres alg\'ebriques.

Sur un plan plus sp\'eculatif, on note que le th\'eor\`eme de Lindemann-Weierstrass fournit une d\'emontration indirecte des conjectures de \cite{R2001} pour les familles de nombres de la forme $(\alpha_1, \dots, \alpha_t, e^{\alpha_1}, \dots, e^{\alpha_t})$ avec $\alpha_1, \dots, \alpha_t$ alg\'ebriques sur $\bQ$.  Cependant, il est possible qu'une d\'emonstration directe puisse conduire \`a d'autres mesures d'ind\'ependance alg\'ebrique comme on en conna\^{\i}t dans le cas d'un seul nombre.  Par exemple, P.~L.~Cisjouw \cite{Ci} a obtenu en 1974 la mesure $|P(e^{\alpha_1})|\ge \exp(-c_3D^2(D+\log H))$ avec $c_3=c_3(\alpha_1)$ qui, pour les petites valeurs de $H$, est plus pr\'ecise que les mesures d\'ecrites ci-dessus.

Au niveau des outils, la preuve du th\'eor\`eme \ref{thm:eff} utilise seulement la notion de hauteur de Weil d'un point alg\'ebrique et quelques propri\'et\'es fondamentales du r\'esultant de polyn\^omes en plusieurs variables rappel\'ees aux paragraphes \ref{sec:outils} et \ref{sec:Res}.  Dans ses grandes lignes, le sch\'ema de d\'emonstration est classique.  Le but est de prendre le r\'esultant de l'homog\'en\'eis\'e $^hP$ de $P$ avec des polyn\^omes homog\`enes $Q_1,\dots,Q_t$ \`a coefficients dans $K=\bQ(\alpha_1,\dots,\alpha_t)$, construits de telle sorte que leurs valeurs absolues au point $\uu = (1, e^{\alpha_1}, \dots, e^{\alpha_t})$ soient petites et que $^hP, Q_1, \dots, Q_t$ n'aient pas de z\'ero commun dans $\bP^t(\bC)$.  Alors le r\'esultant de ces polyn\^omes n'est pas nul et, sous des conditions appropri\'ees, cela conduit \`a une minoration pour la valeur absolue de $^hP(\uu)=P(e^{\alpha_1}, \dots, e^{\alpha_t})$.  Pour atteindre ce but, notre premier pas r\'eside en la construction d'une fonction analytique parti\-cu\-li\`erement simple, d\'ecrite au paragraphe suivant.  Ses valeurs fournissent une famille $\cF$ de polyn\^omes homog\`enes de degr\'e $S$, \`a coefficients dans $K$, dont on montre au paragraphe \ref{sec:LZ} qu'ils n'ont pas de z\'ero commun dans $\bP^t(\bC)$.  On en tire $Q_1,\dots,Q_t$, et on conclut, au paragraphe \ref{sec:Est}, avec la preuve d'une version l\'eg\`erement plus g\'en\'erale du th\'eor\`eme \ref{thm:eff}.

%
%

\section{Fonction auxiliaire}
\label{sec:Fa}

On utilise le r\'esultat d'interpolation suivant qui remonte \`a Lagrange.  Pour tout sous-ensemble fini $E$ de $\bC$ de cardinalit\'e $N\ge 1$, tout entier $T\ge 1$, et toute famille de nombres complexes $u_{\alpha,j}$ index\'ee par les couples $(\alpha,j)$ avec $\alpha\in E$ et $j\in\{0,1,\dots,T-1\}$, il existe un et un seul polyn\^ome $p(x)$ de $\bC[x]$ de degr\'e $< NT$ tel que
\[
 p^{(j)}(\alpha)=u_{\alpha,j} \quad (\alpha\in E,\, 0\le j<T),
\]
o\`u $p^{(j)}$ d\'esigne la $j$-i\`eme d\'eriv\'ee de $p$.  En particulier, si $f$ est une fonction holomorphe d\'efinie sur un ouvert de $\bC$ contenant $E$, on peut choisir $p$ de telle sorte que
\[
 p^{(j)}(\alpha)=f^{(j)}(\alpha) \quad (\alpha\in E,\, 0\le j<T).
\]
Alors la diff\'erence $f(x)-p(x)$ s'annule avec multiplicit\'e au moins $T$ en chaque point de $E$.

Soient $\alpha_1,\dots,\alpha_t$, $c$ et $q$ comme dans l'\'enonc\'e du th\'eor\`eme \ref{thm:eff}.  Pour notre objet, nous employons un ensemble $E$ qui d\'epend d'un param\`etre entier $S\ge 1$, et qui est contenu dans le sous-groupe de $\bC$ engendr\'e par les coordonn\'ees du point $\ualpha=(\alpha_1,\dots,\alpha_t)$.  En posant, pour chaque $\um=(m_1,\dots,m_t)\in\bZ^t$,
\[
 \um\cdot\ualpha = m_1\alpha_1+\cdots+m_t\alpha_t
 \et
 |\um|=|m_1|+\cdots+|m_t|,
\]
l'ensemble en question est
\begin{equation}
 \label{Fa:eq:E}
 E=\{ \um\cdot\ualpha \,;\, \um\in\Sigma(S)\}
 \quad\text{o\`u}\quad
 \Sigma(S)=\{ \um\in\bN^t \,;\, |\um|<S \}.
\end{equation}
Nous nous proposons d'\'etudier la fonction
\[
 g(x)=e^x-p(x)
\]
o\`u $p(x)$ est le polyn\^ome d'interpolation construit comme ci-dessus pour le choix de $f(x)=e^x$.  Cette fonction auxiliaire d\'epend donc de deux param\`etres entiers $S,T\ge 1$.  On verra que, pour des choix appropri\'es de ceux-ci, les d\'eriv\'ees de $g(x)$ aux points $\um\cdot\ualpha$ avec $\um\in\Sigma(S+1)$ sont si petites que cela interdit toute relation de d\'ependance alg\'ebrique entre $e^{\alpha_1}, \dots, e^{\alpha_t}$.  Cela conduira \`a la mesure d'ind\'ependance alg\'ebrique pr\'esent\'ee dans l'introduction.

Pour \'etablir qu'une telle fonction est petite en de pareils points, on pense d'abord \`a employer un lemme de Schwarz. Or, cela ne fonctionne pas ici, puisque la majoration que l'on poss\`ede pour le degr\'e du polyn\^ome $p(x)$ est essentiellement \'egale au nombre de z\'eros $T|\Sigma(S)|$ que $g(x)$ acquiert, compte tenu des multiplicit\'es.  Nous allons donc proc\'eder autrement.

Pour $S,T\ge 1$ fix\'es, on pose $N=|\Sigma(S)|$. Alors $E$ contient $N$ points en vertu de l'hypoth\`ese d'ind\'ependance lin\'eaire de $\alpha_1,\dots,\alpha_t$.  Pour chaque $\um\in\bN^t$ avec $|\um|<S$ et chaque $j\in\{0,1,\dots,T-1\}$, on d\'esigne par $A_{\um,j}(x)$ le polyn\^ome de $\bC[x]$ de degr\'e $<NT$ qui satisfait
\begin{equation}
 \label{Fa:eq:A}
 A_{\um,j}^{(\ell)}(\un\cdot\ualpha)
 =\begin{cases}1 &\text{si $\un=\um$ et $\ell=j$,}\\0 &\text{sinon,}\end{cases}
\end{equation}
pour tout $\un\in\Sigma(S)$ et tout $\ell\in\{0,1,\dots,T-1\}$.  On note aussi $\bC\{x\}$ l'anneau des fonctions holomorphes sur $\bC$ qu'on identifie au sous-anneau de $\bC[[x]]$ des s\'eries convergentes sur tout $\bC$.  La fonction exponentielle $e^x$ est un \'el\'ement de cet anneau et la fonction auxiliaire $g(x)$ introduite ci-dessus est son image sous l'application lin\'eaire $\varphi\colon \bC\{x\} \to \bC\{x\}$ donn\'ee par
\begin{equation}
 \label{Fa:eq:phi}
 \varphi(f(x)) = f(x) - \sum_\mj f^{(j)}(\um\cdot\ualpha)A_{\um,j}(x).
\end{equation}
Le noyau de $\varphi$ \'etant le sous-espace $\bC[x]_{<NT}$ des polyn\^omes de degr\'e $<NT$, on en d\'eduit le r\'esultat suivant o\`u, pour $j\in \bN$ et $k\in \bR$, on pose $k^{(j)}=k(k-1)\dots(k-j+1)$.

\begin{proposition}
\label{Fa:prop:f}
Pour toute s\'erie enti\`ere $f(x)=\sum_{k=0}^\infty c_k x^k \in \bC\{x\}$, on a
\[
 \varphi(f(x))
   = \sum_{k=NT}^\infty c_k \varphi(x^k)
   = \sum_{k=NT}^\infty c_k \bigg( x^k - \sum_\mj k^{(j)} (\um\cdot\ualpha)^{k-j} A_{\um,j}(x) \bigg),
\]
les s\'eries de droite convergeant uniform\'ement sur tout compact de $\bC$.
\end{proposition}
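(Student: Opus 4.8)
The plan is to deduce everything from the linearity of $\varphi$, the explicit description $\ker\varphi=\bC[x]_{<NT}$ already noted, and an elementary ``continuity'' property of $\varphi$ for uniform convergence on compact subsets of $\bC$.

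First I would record the formula for $\varphi(x^k)$. Applying the definition \eqref{Fa:eq:phi} to the monomial $f(x)=x^k$ and using $(x^k)^{(j)}=k^{(j)}x^{k-j}$, hence $(x^k)^{(j)}(\um\cdot\ualpha)=k^{(j)}(\um\cdot\ualpha)^{k-j}$, gives
\[
 \varphi(x^k)=x^k-\sum_\mj k^{(j)}(\um\cdot\ualpha)^{k-j}A_{\um,j}(x).
\]
Moreover, for $k<NT$ the monomial $x^k$ lies in $\bC[x]_{<NT}=\ker\varphi$, so $\varphi(x^k)=0$ in that range; this is what makes the series begin at $k=NT$.

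Next I would establish the continuity statement: if a sequence $(f_M)$ in $\bC\{x\}$ is such that, for each $j$ with $0\le j<T$, the derivatives $f_M^{(j)}$ converge uniformly on every compact subset of $\bC$ to $f^{(j)}$, then $\varphi(f_M)\to\varphi(f)$ uniformly on compacta. This is immediate from \eqref{Fa:eq:phi}: the map $\varphi$ subtracts from its argument a fixed finite linear combination of the polynomials $A_{\um,j}(x)$, whose coefficients are the finitely many numbers $f^{(j)}(\um\cdot\ualpha)$ with $\um\in\Sigma(S)$ and $0\le j<T$, and each of these depends continuously on $f$ in the stated sense; the term $f$ itself also converges uniformly on compacta.

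I would then apply this to the partial sums $f_M(x)=\sum_{k=0}^M c_k x^k$. Since $f(x)=\sum_{k\ge 0}c_k x^k$ has infinite radius of convergence, $f_M\to f$ uniformly on every closed disc; term-by-term differentiation (legitimate everywhere, as $f^{(j)}(x)=\sum_{k\ge j}k^{(j)}c_k x^{k-j}$ again has infinite radius of convergence) shows that $f_M^{(j)}\to f^{(j)}$ uniformly on every closed disc, for each fixed $j$, in particular for $0\le j<T$. By the previous step, $\varphi(f_M)\to\varphi(f)$ uniformly on compact sets. On the other hand, by linearity of $\varphi$ and the vanishing $\varphi(x^k)=0$ for $k<NT$,
\[
 \varphi(f_M)=\sum_{k=0}^M c_k\,\varphi(x^k)=\sum_{k=NT}^M c_k\,\varphi(x^k)
\]
for every $M\ge NT$. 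Letting $M\to\infty$, the left-hand side tends to $\varphi(f)$ uniformly on compacta, so the series $\sum_{k=NT}^\infty c_k\,\varphi(x^k)$ converges uniformly on every compact subset of $\bC$ and its sum is $\varphi(f(x))$; substituting the formula for $\varphi(x^k)$ found above yields the last expression in the statement. The only point requiring any care is the passage to the limit, which, as explained, reduces to the fact that $\varphi$ involves only finitely many point-evaluations of finitely many derivatives together with the identity map, so there is no genuine obstacle.
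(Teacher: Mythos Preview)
Your proposal is correct and follows essentially the same approach as the paper: both rely on linearity of $\varphi$, the identification $\ker\varphi=\bC[x]_{<NT}$, the explicit formula $\varphi(x^k)=x^k-\sum_\mj k^{(j)}(\um\cdot\ualpha)^{k-j}A_{\um,j}(x)$, and the uniform convergence on compacta of the termwise-differentiated series $f^{(j)}(x)=\sum_{k\ge j}c_k k^{(j)}x^{k-j}$. The only difference is organizational: the paper substitutes these series directly into \eqref{Fa:eq:phi} and rearranges, whereas you package the same passage to the limit as a continuity property of $\varphi$ applied to the partial sums $f_M$; this is a matter of presentation rather than a different idea.
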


\begin{proof}[D\'emonstration]
On sait que, pour tout $j\in\bN$, on a $f^{(j)}(x) = \sum_{k=j}^\infty c_k k^{(j)} x^{k-j}$, la convergence de cette s\'erie \'etant uniforme sur tout compact.  On en d\'eduit, par r\'earrangement des termes,
\[
 \varphi(f(x))
 = \sum_{k=0}^\infty c_k x^k
   - \sum_\mj \bigg( \sum_{k=j}^\infty c_k k^{(j)} (\um\cdot\ualpha)^{k-j} \bigg) A_{\um,j}(x)
  = \sum_{k=0}^\infty c_k \varphi(x^k).
\]
La conclusion suit puisque $\varphi(x^k)=0$ pour $k<NT$.
\end{proof}

Puisque la convergence est uniforme sur tout compact, on peut d\'eriver termes \`a termes la s\'erie qui repr\'esente $\varphi(f(x))$.  Pour notre fonction auxiliaire, on en d\'eduit les formules suivantes dont la premi\`ere d\'ecoule de la d\'efinition et la seconde de la proposition.

\begin{corollaire}
\label{Fa:cor:g}
La fonction auxiliaire $g(x) = \varphi(e^x) = e^x - \sum e^{\um\cdot\ualpha}A_{\um,j}(x)$
satisfait
\begin{align*}
 g^{(\ell)}(\un\cdot\ualpha)
   &= e^{\un\cdot\ualpha} - \sum_\mj e^{\um\cdot\ualpha}A^{(\ell)}_{\um,j}(\un\cdot\ualpha)\\
   &= \sum_{k=NT}^\infty \frac{1}{k!} \bigg( k^{(\ell)}(\un\cdot\ualpha)^{k-\ell} - \sum_\mj k^{(j)} (\um\cdot\ualpha)^{k-j} A^{(\ell)}_{\um,j}(\un\cdot\ualpha) \bigg)
\end{align*}
pour tout $\ell\in\bN$ et tout $\un\in\bZ^t$.
\end{corollaire}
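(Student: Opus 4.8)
The plan is to obtain both displayed identities simply by specializing to $f(x)=e^x$ the two formulas for $\varphi(f(x))$ that are already available, and then differentiating $\ell$ times and evaluating at $\nalpha$.

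For the first line I would start from the defining relation \eqref{Fa:eq:phi}. Since every derivative of $e^x$ equals $e^x$, one has $(e^x)^{(j)}(\malpha)=e^{\malpha}$ for all $\um$ and $j$; substituting this into \eqref{Fa:eq:phi} gives precisely $g(x)=\varphi(e^x)=e^x-\sum_\mj e^{\malpha}A_{\um,j}(x)$, the closed form for $g$ stated in the corollary. Differentiating this identity $\ell$ times — again using $(e^x)^{(\ell)}=e^x$ — and then setting $x=\nalpha$ yields the first expression for $g^{(\ell)}(\nalpha)$.

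For the second line I would apply Proposition \ref{Fa:prop:f} with $f(x)=e^x=\sum_{k\ge 0}\frac{1}{k!}x^k$, i.e.\ with $c_k=1/k!$, which gives
\[
 g(x)=\sum_{k=NT}^\infty\frac{1}{k!}\bigg(x^k-\sum_\mj k^{(j)}(\malpha)^{k-j}A_{\um,j}(x)\bigg),
\]
the series converging uniformly on every compact of $\bC$. By the remark made just before the statement, such a series of holomorphic functions can be differentiated term by term, the differentiated series again converging uniformly on compacta; applying $d^\ell/dx^\ell$ inside the sum, using $\frac{d^\ell}{dx^\ell}x^k=k^{(\ell)}x^{k-\ell}$, and finally evaluating at $x=\nalpha$ produces the second expression for $g^{(\ell)}(\nalpha)$.

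I do not expect any genuine obstacle here. The only point that deserves attention is the legitimacy of differentiating the series term by term, which is guaranteed by the Weierstrass convergence theorem for holomorphic functions and has in effect already been recorded in the paragraph preceding the corollary. One should also keep in mind that the interpolation nodes $\malpha$ with $\um\in\Sigma(S)$ really are $N$ distinct points, so that the polynomials $A_{\um,j}$ are well defined; this follows from the $\bQ$-linear independence of $\alpha_1,\dots,\alpha_t$, as noted earlier.
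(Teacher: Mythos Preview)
Your proposal is correct and matches the paper's own justification exactly: the first identity comes from specializing \eqref{Fa:eq:phi} to $f=e^x$ and differentiating, while the second comes from Proposition~\ref{Fa:prop:f} with $c_k=1/k!$ together with term-by-term differentiation, which the paper records just before the corollary. There is nothing to add.
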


Comme les polyn\^omes $A_{\um,j}(x)$ sont \`a coefficients dans $\bQ(\alpha_1,\dots,\alpha_t)$, la premi\`ere formule montre que les nombres $g^{(\ell)}(\un\cdot\ualpha)$ appartiennent au corps $\bQ(\alpha_1,\dots,\alpha_t,e^{\alpha_1},\dots,e^{\alpha_t})$.  La seconde permet de majorer leurs valeurs absolues gr\^ace au fait que $k!$ cro\^{\i}t tr\`es rapidement en fonction de $k$.  C'est fondamentalement le m\^eme ph\'enom\`ene qui est \`a la base de la m\'ethode d'Hermite.  En pratique, on prendra $\un\in\Sigma(S+1)$ et $\ell\in\{0,1,\dots,T-1\}$.  Les deux paragraphes suivants sont un rappel d'outils qui permettront d'exploiter ces donn\'ees.

%
%

\section{Hauteur}
\label{sec:outils}

Soit $K\subset \bC$ une extension alg\'ebrique de $\bQ$ de degr\'e fini $d$, soit $\cO_K$ l'anneau des entiers de $K$, et soit $N_{K/\bQ}$ la norme de $K$ sur $\bQ$.  Pour chaque entier $n\ge 1$ et chaque point non nul $\uu=(u_0,u_1,\dots,u_n)\in K^{n+1}$, on d\'esigne par $N_{K/\bQ}(\uu)$ la norme de l'id\'eal fractionnaire de $K$ engendr\'e par $u_0,\dots,u_n$ en tant que $\cO_K$-module.  Pour chaque plongement $\sigma$ de $K$ dans $\bC$, on d\'esigne aussi par $\uu^\sigma$ le point $(\sigma(u_0),\dots,\sigma(u_n))\in\bC^{n+1}$ et par $\|\uu^\sigma\| = \max_{0\le i\le n} |\sigma(u_i)|$ sa norme du maximum.  Enfin, on pose
\[
 \Hinf(\uu)=\prod_\sigma \|\uu^\sigma\|^{1/d}, \quad
 \Hfin(\uu)=N_{K/\bQ}(\uu)^{-1/d} \et
 H(\uu)=\Hinf(\uu)\Hfin(\uu),
\]
le produit de gauche portant sur les $d$ plongements distincts $\sigma$ de $K$ dans $\bC$.  Le nombre $H(\uu)$ s'appelle la \emph{hauteur de Weil absolue} de $\uu$.  Elle satisfait $H(\uu)\ge 1$ et $H(\lambda\uu)=H(\uu)$ pour tout $\lambda\in K^*$.  De plus, si $L$ est un sous-corps de $K$ contenant les coordonn\'ees de $\uu$, alors la hauteur de $\uu$ est la m\^eme calcul\'ee sur $K$ ou sur $L$.   On renvoie le lecteur \`a \cite[\S 3.2]{W2000} pour une d\'efinition alternative de la hauteur en termes de produit de hauteurs locales et un traitement plus complet de ses propri\'et\'es.

On d\'efinit la \emph{hauteur} $H(P)$ d'un polyn\^ome non nul $P$ \`a coefficients dans $K$ comme \'etant la hauteur du vecteur de ses coordonn\'ees dans un ordre quelconque.  En particulier, si $P$ est \`a coefficients dans $\bZ$, alors $H(P)=\cont(P)^{-1}\|P\|$, o\`u $\cont(P)$ d\'esigne le contenu de $P$, c'est-\`a-dire le pgcd de ses coefficients.  Par ailleurs, on d\'efinit la \emph{longueur} $L(P)$ d'un polyn\^ome $P$ \`a coefficients dans $\bC$ comme la somme des valeurs absolues de ses coefficients.  Ces notions nous serons utiles \`a travers le r\'esultat suivant.

\begin{proposition}
\label{propU}
Soient $\uU_i=(U_{i,1},\dots,U_{i,n_i})$ pour $i=0,\dots,t$, des familles ind\'ependantes d'ind\'etermin\'ees sur $K$, et soit $R\in \bZ[\uU_0,\dots,\uU_t]$ un polyn\^ome multi-homog\`ene de multi-degr\'e $(N_0,\dots,N_t)$ en ces familles d'ind\'etermin\'ees.  Supposons que, pour $i=0,\dots,t$, il existe $\uu_i\in K^{n_i}$ et $\epsilon_i\in\bC^{n_i}$ tels que
\[
 R(\uu_0,\dots,\uu_t)\neq 0 \et R(\uu_0+\epsilon_0,\dots,\uu_t+\epsilon_t)=0.
\]
Alors on a
\[
 1 \le 2^{N_0+\cdots+N_t} L(R)^d  H(\uu_0)^{dN_0}\cdots H(\uu_t)^{dN_t}
        \max\left( \frac{\|\epsilon_0\|}{\|\uu_0\|}, \dots, \frac{\|\epsilon_t\|}{\|\uu_t\|} \right).
\]
\end{proposition}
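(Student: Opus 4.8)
The plan is a Liouville-type comparison applied to $\gamma:=R(\uu_0,\dots,\uu_t)$, which by hypothesis is a nonzero element of $K$: one bounds $|\gamma|$ from above, exploiting that $R$ vanishes at the perturbed point, and from below, via the product formula on $K$, and then matches the two estimates. One may assume $N_i\ge 1$ for every $i$, since a family with $N_i=0$ does not occur in $R$ and contributes only trivial factors $\|\uu_i\|^{N_i}=H(\uu_i)^{dN_i}=1$; then each $\uu_i$ is nonzero, for otherwise $\gamma=0$. One may also assume $\|\epsilon_i\|\le\|\uu_i\|$ for every $i$: otherwise $\max_i(\|\epsilon_i\|/\|\uu_i\|)\ge 1$ and, since $2^{N_0+\cdots+N_t}\ge 1$, $L(R)\ge 1$ and $H(\uu_i)\ge 1$, the desired inequality already holds.

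For the upper bound, write $\gamma=R(\uu_0,\dots,\uu_t)-R(\uu_0+\epsilon_0,\dots,\uu_t+\epsilon_t)$, expand $R$ as a $\bZ$-linear combination of monomials $c_{\ua}\,\uU_0^{\ua_0}\cdots\uU_t^{\ua_t}$ with $|\ua_i|=N_i$, and expand each $(\uu_i+\epsilon_i)^{\ua_i}$ by the multinomial theorem. The contributions containing no $\epsilon_i$ reconstitute $R(\uu_0,\dots,\uu_t)$ and cancel; every surviving contribution is an integer multiple of $\prod_i\uu_i^{\ua_i-\uf_i}\epsilon_i^{\uf_i}$ for some $\uf_i\le\ua_i$ not all zero, and
\[
 \Big|\prod_i\uu_i^{\ua_i-\uf_i}\epsilon_i^{\uf_i}\Big|\ \le\ \prod_i\|\uu_i\|^{N_i}\prod_i\Big(\frac{\|\epsilon_i\|}{\|\uu_i\|}\Big)^{|\uf_i|}\ \le\ \Big(\prod_i\|\uu_i\|^{N_i}\Big)\max_i\frac{\|\epsilon_i\|}{\|\uu_i\|},
\]
the last step by pulling out one factor $\|\epsilon_i\|/\|\uu_i\|\le 1$ with $|\uf_i|\ge 1$ and bounding the remaining such factors by $1$. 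Since the positive integer coefficients in the multinomial expansion of $\prod_i(\uu_i+\epsilon_i)^{\ua_i}$ sum to $\prod_i 2^{N_i}=2^{N_0+\cdots+N_t}$ and $\sum_{\ua}|c_{\ua}|=L(R)$, this yields
\[
 |\gamma|\ \le\ 2^{N_0+\cdots+N_t}\,L(R)\,\|\uu_0\|^{N_0}\cdots\|\uu_t\|^{N_t}\,\max_i\frac{\|\epsilon_i\|}{\|\uu_i\|}.
\]

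For the lower bound I would treat the archimedean and the finite places separately. For each embedding $\sigma$ of $K$ in $\bC$ one has $|\sigma(\gamma)|=|R(\uu_0^\sigma,\dots,\uu_t^\sigma)|\le L(R)\prod_i\|\uu_i^\sigma\|^{N_i}$; multiplying over the $d-1$ embeddings $\sigma\ne\mathrm{id}$ and using $\prod_\sigma\|\uu_i^\sigma\|=\Hinf(\uu_i)^d$ gives $\prod_{\sigma\ne\mathrm{id}}|\sigma(\gamma)|\le L(R)^{d-1}\prod_i\big(\Hinf(\uu_i)^d/\|\uu_i\|\big)^{N_i}$. On the finite side, integrality of the coefficients of $R$ forces $\gamma$ into the product ideal $\mathfrak a_0^{N_0}\cdots\mathfrak a_t^{N_t}$, where $\mathfrak a_i\subset K$ is the fractional $\cO_K$-ideal generated by the coordinates of $\uu_i$; hence $\gamma\cO_K\,(\mathfrak a_0^{N_0}\cdots\mathfrak a_t^{N_t})^{-1}$ is an integral ideal, of norm $\ge 1$, so that $|N_{K/\bQ}(\gamma)|\ge\prod_i N_{K/\bQ}(\uu_i)^{N_i}=\prod_i\Hfin(\uu_i)^{-dN_i}$. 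Feeding these two estimates into the product formula $|N_{K/\bQ}(\gamma)|=\prod_\sigma|\sigma(\gamma)|$, and using $\Hinf(\uu_i)\Hfin(\uu_i)=H(\uu_i)$, gives
\[
 |\gamma|\ =\ \frac{|N_{K/\bQ}(\gamma)|}{\prod_{\sigma\ne\mathrm{id}}|\sigma(\gamma)|}\ \ge\ \frac{\|\uu_0\|^{N_0}\cdots\|\uu_t\|^{N_t}}{L(R)^{d-1}\,H(\uu_0)^{dN_0}\cdots H(\uu_t)^{dN_t}}.
\]

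Comparing the two displayed bounds for $|\gamma|$, the common factor $\|\uu_0\|^{N_0}\cdots\|\uu_t\|^{N_t}$ cancels and one of the two powers of $L(R)$ is absorbed to produce $L(R)^d$, which is exactly the asserted inequality. The main obstacle is the finite-place bookkeeping: showing $\gamma\in\mathfrak a_0^{N_0}\cdots\mathfrak a_t^{N_t}$ — which is the only place where the hypothesis $R\in\bZ[\uU_0,\dots,\uU_t]$ enters — and turning this inclusion into $|N_{K/\bQ}(\gamma)|\ge\prod_i N_{K/\bQ}(\uu_i)^{N_i}$ through the principle that an integral ideal has norm at least $1$. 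The archimedean estimate and the multinomial bound are routine.
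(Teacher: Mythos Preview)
Your proof is correct and follows essentially the same Liouville-type argument as the paper: the paper also uses the inclusion $\gamma\in\mathfrak a_0^{N_0}\cdots\mathfrak a_t^{N_t}$ for the finite part, the trivial estimate $|\sigma(\gamma)|\le L(R)\prod_i\|\uu_i^\sigma\|^{N_i}$ at the non-identity embeddings, and the multinomial bound on $|R(\uu_0,\dots,\uu_t)-R(\uu_0+\epsilon_0,\dots,\uu_t+\epsilon_t)|$ at the identity embedding. The only cosmetic difference is that the paper organizes the comparison as upper and lower bounds on $|N_{K/\bQ}(\gamma)|$ rather than on $|\gamma|$, and it sketches the multinomial step by reducing to a single monomial and normalizing $\|\uu_i\|=1$, whereas you spell it out directly.
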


On peut consid\'erer cette derni\`ere in\'egalit\'e comme une minoration de la distance du point $(\uu_0,\dots,\uu_t)$ au plus proche z\'ero complexe de $R$.

\begin{proof}[D\'emonstration]
Pour $i=0,\dots,t$, d\'esignons par $\gA_i$ l'id\'eal fractionnaire de $K$ engendr\'e par les coordonn\'ees de $\uu_i$. Comme $R$ est multi-homog\`ene de multi-degr\'e $(N_0,\dots,N_t)$, le nombre
$R(\uu_0,\dots,\uu_t)$ est un \'el\'ement non nul de $\gA_0^{N_0}\cdots\gA_t^{N_t}$ et par suite
\begin{equation}
\label{propU:eq}
 |N_{K/\bQ}(R(\uu_0,\dots,\uu_t))|
  \ge N_{K/\bQ}(\gA_0^{N_0}\cdots\gA_t^{N_t})
  = \Hfin(\uu_0)^{-dN_0}\cdots\Hfin(\uu_t)^{-dN_t}.
\end{equation}
Pour tout plongement $\sigma\colon K\to \bC$, on a aussi
\[
 |\sigma(R(\uu_0,\dots,\uu_t))|
 = |R(\uu_0^\sigma,\dots,\uu_t^\sigma)|
 \le L(R) \|\uu_0^\sigma\|^{N_0} \cdots \|\uu_t^\sigma\|^{N_t}.
\]
Pour le plongement donn\'e par l'inclusion de $K$ dans $\bC$, on utilise plut\^ot l'estimation
\begin{align*}
 |R(\uu_0,\dots,\uu_t)|
 &= |R(\uu_0+\epsilon_0,\dots,\uu_t+\epsilon_t)-R(\uu_0,\dots,\uu_t)|\\
 &\le 2^{N_0+\cdots+N_t} L(R) \|\uu_0\|^{N_0}\cdots\|\uu_t\|^{N_t}
    \max\left( \frac{\|\epsilon_0\|}{\|\uu_0\|}, \dots, \frac{\|\epsilon_t\|}{\|\uu_t\|} \right)
\end{align*}
qui se v\'erifie ais\'ement en se ramenant d'abord au cas o\`u $R$ est un mon\^ome puis, par homog\'en\'eit\'e, au cas o\`u $\|\uu_0\|=\cdots=\|\uu_t\|=1$ (comme on a alors $|R(\uu_0,\dots,\uu_t)| \le L(R)$, on peut, pour ce dernier calcul, supposer que $\|\epsilon_0\|,\dots,\|\epsilon_t\|\le 1$).   On en d\'eduit que
\[
 |N_{K/\bQ}(R(\uu_0,\dots,\uu_t))|
 \le 2^{N_0+\cdots+N_t} L(R)^d \Hinf(\uu_0)^{dN_0}\cdots\Hinf(\uu_t)^{dN_t}
     \max_{0\le i\le t} \frac{\|\epsilon_i\|}{\|\uu_i\|},
\]
et la conclusion suit en combinant cette estimation avec \eqref{propU:eq}.
\end{proof}

%
%

\section{R\'esultant}
\label{sec:Res}

L'introduction du r\'esultant comme outil de la th\'eorie des nombres transcendants est rela\-tivement r\'ecente. Quoique E.~Borel \cite{Bo} ait \'et\'e le premier \`a l'utiliser pour des polyn\^omes en une variable en 1899, c'est seulement \`a partir de 1949 que, gr\^ace au crit\`ere de Gel'fond, son emploi s'est syst\'ematis\'e. Pour les polyn\^omes en plusieurs variables, il faut encore attendre les travaux de G.~V.~Chudnovsky autour de 1974, puis l'introduction de la th\'eorie de l'\'elimination par Yu.~Nesterenko \cite{Ne1} en 1977.  De nos jours, ces efforts culminent avec le crit\`ere de P.~Philippon \cite{Ph2} de 1986 qui, pour les questions d'ind\'ependance alg\'ebrique, fournit une g\'en\'eralisation quasi-optimale du crit\`ere de Gel'fond (voir l'introduction de \cite{Ph2} pour un bref historique des travaux ant\'erieurs).  Des raffinements de ce crit\`ere d\^us \`a E.~M. Jabbouri \cite{Jab} et \`a C.~Jadot \cite{Jad} permettent d'en d\'eduire des mesures d'ind\'ependance alg\'ebrique mais, dans le travail pr\'esent, nous n'aurons besoin que des propri\'et\'es les plus simples du r\'esultant de polyn\^omes homog\`enes en plusieurs variables.

Pour rappeler ces propri\'et\'es, on travaille avec l'anneau de polyn\^omes $\bC[\usX]$ o\`u $\usX=(X_0,\dots,X_t)$ est une famille de $t+1$ ind\'etermin\'ees et, pour chaque entier $D\ge 0$, on d\'esigne par $\bC[\usX]_D$ sa partie homog\`ene de degr\'e $D$, c'est-\`a-dire le sous-espace vectoriel de $\bC[\usX]$ engendr\'e sur $\bC$ par les mon\^omes de degr\'e total $D$.

\begin{proposition}
\label{propRes}
Pour chaque suite de $t+1$ entiers positifs $\uD=(D_0,\dots,D_t)$, il existe une application polynomiale
\[
 \Res_\uD\colon \bCC \fleche \bC\,,
\]
appel\'ee \emph{r\'esultant en degr\'e $\uD$}, qui poss\`ede les propri\'et\'es suivantes\ :
\begin{itemize}
\item[(1)] ses z\'eros sont les suites de polyn\^omes $(Q_0,\dots,Q_t)\in\bCC$ qui admettent au moins un z\'ero commun dans $\bP^t(\bC)$\ ;
\item[(2)] pour chaque $i=0,\dots,t$, elle est homog\`ene de degr\'e $D_0\cdots D_t/D_i$ en son argument d'indice $i$, c'est-\`a-dire que
    \[
    \Res_\uD(Q_0,\dots,\lambda Q_i,\dots,Q_t)
    = \lambda^{D_0\cdots D_t/D_i} \Res_\uD(Q_0,\dots,Q_t)
    \]
    pour tout $\lambda\in\bC$ et tout $(Q_0,\dots,Q_t)\in\bCC$\ ;
\item[(3)] le polyn\^ome qui la repr\'esente dans la base des $(t+1)$-uplets de mon\^omes du $\bC$-espace vectoriel $\bCC$ est \`a coefficients dans $\bZ$ et irr\'eductible sur $\bZ$\ ;
\item[(4)] le polyn\^ome en question est de longueur au plus $(t+1)^{3(t+1)D_0\cdots D_t}$.
\end{itemize}
\end{proposition}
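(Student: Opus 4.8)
The plan is to construct $\Res_\uD$ by classical elimination theory. For $i=0,\dots,t$ introduce the generic form $Q_i=\sum_{|\alpha|=D_i}U_{i,\alpha}\usX^\alpha$ with independent indeterminate coefficients; as $\alpha$ runs over the $N_i=\binom{D_i+t}{t}$ exponents of degree $D_i$, these form a basis of $\bC[\usX]_{D_i}$. Let $W\subset\bP^t\times\mathbb{A}^{N_0+\cdots+N_t}$ be the incidence variety of pairs $(\ux,(c_{i,\alpha}))$ with $Q_i(\ux)=0$ for all $i$. Since for fixed $\ux\in\bP^t$ these are $t+1$ independent linear conditions on the coefficients, $W\fleche\bP^t$ is a linear fibration with irreducible fibres over $\bP^t$; hence $W$ is geometrically irreducible, defined over $\bZ$, of dimension $N_0+\cdots+N_t-1$.

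For (1) and (3): by the main theorem of elimination theory the projection $\pi(W)$ of $W$ to the coefficient space is Zariski closed, and by construction it is exactly the locus of tuples $(Q_0,\dots,Q_t)$ admitting a common zero in $\bP^t(\bC)$; being the image of $W$, it is geometrically irreducible and defined over $\bQ$. A Bézout count shows $W\fleche\pi(W)$ is generically finite, so $\dim\pi(W)=N_0+\cdots+N_t-1$, and $\pi(W)$ is proper (for instance $(X_0^{D_0},\dots,X_t^{D_t})\notin\pi(W)$). Hence $\pi(W)$ is a hypersurface, its ideal is principal in $\bQ[U_{i,\alpha}]$, a primitive generator $\Res_\uD\in\bZ[U_{i,\alpha}]$ exists (unique up to sign, pinned down by a normalisation such as $\Res_\uD(X_0^{D_0},\dots,X_t^{D_t})=1$), and it is irreducible over $\bC$, hence over $\bZ$. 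For (2): scaling $Q_i$ preserves its zero set, so $\pi(W)$ is $\Gm$-stable in the $i$-th block of coefficients; as $\Res_\uD$ is irreducible it is a $\Gm$-eigenvector, i.e.\ homogeneous of some degree $n_i$ in that block. Specialising the $Q_j$ with $j\ne i$ to generic forms, their common zeros in $\bP^t(\bC)$ form a reduced set $Z$ of $\prod_{j\ne i}D_j$ points (Bézout plus a Bertini-type genericity), and on this slice $\Res_\uD$ reduces, up to a nonzero scalar, to $\prod_{\xi\in Z}Q_i(\xi)$; comparing degrees — noting that the top block-$i$ part of $\Res_\uD$ survives the generic specialisation, so the degree cannot drop — gives $n_i=|Z|=D_0\cdots D_t/D_i$.

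The technical heart is (4). One replaces $\Res_\uD$ by an explicit determinantal formula: with $\rho=1+\sum_{i=0}^t(D_i-1)$, Macaulay's square matrix $\cM_\rho$, indexed by the monomials of degree $\rho$, has each row equal to the coefficient vector of some $\usX^\beta Q_k$, so every entry is $0$ or a single indeterminate $U_{k,\alpha}$, and $\det\cM_\rho=\Res_\uD\cdot E$ with $E\in\bZ[U_{i,\alpha}]$; hence the Mahler measure of $\Res_\uD$ is at most that of $\det\cM_\rho$, which is at most $L(\det\cM_\rho)$, and expanding the determinant bounds $L(\det\cM_\rho)$ by the product over the rows of the number of nonzero entries of each row, a quantity governed by the $N_i$. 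Since $\binom{D_i+t}{t}\le(t+1)^{D_i}$, the number of monomials of $\Res_\uD$ is at most $\prod_iN_i^{D_0\cdots D_t/D_i}\le(t+1)^{(t+1)D_0\cdots D_t}$, so $L(\Res_\uD)\le(t+1)^{(t+1)D_0\cdots D_t}\|\Res_\uD\|_\infty$; bounding $\|\Res_\uD\|_\infty$ — either from $L(\det\cM_\rho)$ via a Gelfond--Mahler inequality (using that $E$ has integer coefficients, hence Mahler measure at least $1$), or, more efficiently, by induction on $t$ from a Poisson-type product formula $\Res_\uD(Q_0,\dots,Q_t)=\Res(\bar Q_1,\dots,\bar Q_t)^{D_0}\prod_{\xi}Q_0(\xi)^{m_\xi}$ together with Cauchy-type bounds on the sizes of the common zeros $\xi$ — then produces the stated estimate $(t+1)^{3(t+1)D_0\cdots D_t}$.

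The obstacle here is not conceptual but quantitative: the construction and properties (1)--(3) are essentially classical elimination theory and could be quoted from van der Waerden or Jouanolou, but squeezing the length of $\Res_\uD$ down to \emph{precisely} $(t+1)^{3(t+1)D_0\cdots D_t}$, rather than to some cruder $C^{D_0\cdots D_t}$, demands a judicious choice of determinantal (or Poisson) representation and patient bookkeeping with binomial coefficients; alternatively one may simply invoke the explicit resultant estimates in the work of Philippon or Jouanolou.
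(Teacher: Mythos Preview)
Your treatment of (1)--(3) via the incidence variety and projection is correct classical elimination theory; the paper simply quotes Macaulay \cite[Ch.~I, \S\S7, 9, 10]{Mac} for these facts. For (4), the paper takes precisely the route you list as your fallback at the end: it invokes Philippon. Concretely, Proposition~2.8 of \cite{Ph2} gives
\[
 \log M(\Res_\uD)\le D_0\cdots D_t\big((t+1)\log(t+1)+\log M(\Res_\uun)\big)\le 2(t+1)\log(t+1)\,D_0\cdots D_t,
\]
the second step using $M(\Res_\uun)\le L(\Res_\uun)=(t+1)!\le(t+1)^{t+1}$; this is exactly the Poisson-type reduction to the linear case that you sketch. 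Lemma~1.13 of \cite{Ph2} then converts Mahler measure to length, contributing the monomial-count factor $\prod_i\binom{D_i+t}{t}^{D_0\cdots D_t/D_i}\le(t+1)^{(t+1)D_0\cdots D_t}$ that you also identify, and the product yields $(t+1)^{3(t+1)D_0\cdots D_t}$. Your Macaulay-determinant alternative is sound in principle, but as you yourself acknowledge, extracting the precise exponent $3(t+1)$ from $\det\cM_\rho$ requires additional bookkeeping; the paper's short derivation from Philippon's cited results is the more direct path to the stated constant.
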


En vertu du th\'eor\`eme des z\'eros d'Hilbert, les propri\'et\'es (1) et (3) caract\'erisent $\Res_\uD$ \`a multiplication pr\`es par $\pm 1$.  Les propri\'et\'es (2) et (4) viennent donc simplement pr\'eciser sa nature en majorant son degr\'e et la taille de ses coefficients dans la base des $(t+1)$-uplets de mon\^omes.  Ces estimations sont cruciales pour les applications en ind\'ependance alg\'ebrique comme l'exemple que nous allons tra\^{\i}ter ici l'illustrera.

En degr\'e $\uun=(1,\dots,1)$, le r\'esultant est un objet familier.  C'est simplement le d\'eterminant de $t+1$ formes lin\'eaires, $\det\colon(\bC[\usX]_1)^{t+1}\to\bC$.  C'est une application homog\`ene de degr\'e $1$ en chacun de ses $t+1$ arguments et son polyn\^ome sous-jacent est de longueur $(t+1)! \le (t+1)^{t+1}$.

L'existence d'une application polynomiale satisfaisant les conditions (1) \`a (3) est un r\'esultat classique \cite[Chap.~I, \S\S 7, 9, 10]{Mac}.  Quant \`a la propri\'et\'e (4), de nature arithm\'etique, elle semble \^etre plus r\'ecente et son extension aux vari\'et\'es projectives fait l'objet de recherches actives initi\'ees dans \cite{Ph4}.  L'estimation que nous donnons ici est relativement grossi\`ere mais suffisante pour notre objet.

\begin{proof}[D\'emonstration de la propri\'et\'e (4)]
La proposition 2.8 de \cite{Ph2} montre que la mesure de Mahler $M(\Res_\uD)$ du r\'esultant en degr\'e $\uD$ (d\'efinie dans \cite[\S I.3]{Ph2}) satisfait
\begin{align*}
 \log M(\Res_\uD)
 &\le D_0\cdots D_t \big((t+1)\log(t+1)+\log M(\Res_\uun)\big)\\
 &\le 2(t+1)\log(t+1)D_0\cdots D_t,
\end{align*}
la seconde estimation utilisant la majoration $M(\Res_\uun)\le L(\Res_\uun) \le (t+1)^{t+1}$ mentionn\'ee pr\'ec\'edemment.  Gr\^ace au lemme 1.13 de \cite{Ph2}, on en d\'eduit, comme dans la preuve du lemme 3.5 de \cite{LR}, que
\begin{align*}
 L(\Res_\uD)
 \le M(\Res_\uD) \prod_{i=0}^t \binom{D_i+t}{t}^{D_0\cdots D_t/D_i}
 \le (t+1)^{3(t+1)D_0\cdots D_t},
\end{align*}
en utilisant l'estimation grossi\`ere $\binom{D+t}{t}\le (t+1)^D$ valable pour tout entier $D\ge 0$.
\end{proof}

Notons que si on applique plut\^ot la proposition 5.3 de \cite{LR}, on obtient pour $L(\Res_\uD)$ la majoration $(t+1)^{4(t+1)D_0\cdots D_t}$ qui est moins pr\'ecise.

\smallskip
Soit $K\subset \bC$ un corps de nombres de degr\'e $d$, comme au paragraphe pr\'ec\'edent. De la proposition \ref{propRes}, on ne retiendra que la cons\'equence suivante.

\begin{corollaire}
\label{corRes1}
Soient $Q_0,\dots,Q_t$ des polyn\^omes homog\`enes de $K[\usX]$ de degr\'es respectifs $D_0,\dots,D_t\ge 1$, sans z\'ero commun dans $\bP^t(\bC)$, et soit $(\xi_1,\dots,\xi_t)\in\bC^t$. Pour $i=0,\dots,t$, on pose  $\delta_i := Q_i(1,\xi_1,\dots,\xi_t)$.  Alors, on a
\[
 1 \le (t+1)^{4d(t+1)D_0\cdots D_t}
        \left(\prod_{i=0}^t H(Q_i)^{dD_0\cdots D_t/D_i}\right)
        \max\left( \frac{|\delta_0|}{\|Q_0\|},\dots,\frac{|\delta_t|}{\|Q_t\|} \right).
\]
\end{corollaire}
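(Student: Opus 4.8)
Le plan est d'appliquer la proposition \ref{propU} au r\'esultant en degr\'e $\uD=(D_0,\dots,D_t)$, vu comme polyn\^ome multi-homog\`ene en les familles de coefficients de $t+1$ polyn\^omes homog\`enes g\'en\'eriques de degr\'es respectifs $D_0,\dots,D_t$ en les ind\'etermin\'ees $X_0,\dots,X_t$. Concr\`etement, pour $i=0,\dots,t$ on poserait $\uU_i=(U_{i,1},\dots,U_{i,n_i})$ avec $n_i=\binom{D_i+t}{t}$ (le nombre de mon\^omes de degr\'e $D_i$ en $t+1$ variables), et l'on prendrait pour $R\in\bZ[\uU_0,\dots,\uU_t]$ le polyn\^ome donn\'e par les propri\'et\'es (1)--(3) de la proposition \ref{propRes}\ : la propri\'et\'e (2) assure qu'il est multi-homog\`ene de multi-degr\'e $(N_0,\dots,N_t)$ avec $N_i=D_0\cdots D_t/D_i$, la propri\'et\'e (3) qu'il est \`a coefficients entiers, et la propri\'et\'e (4) que $L(R)\le (t+1)^{3(t+1)D_0\cdots D_t}$.

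On sp\'ecialiserait alors en prenant, pour $i=0,\dots,t$, le vecteur $\uu_i\in K^{n_i}$ des coefficients de $Q_i$, de sorte que $\|\uu_i\|=\|Q_i\|$ et $H(\uu_i)=H(Q_i)$ (c'est la d\'efinition de la hauteur d'un polyn\^ome). Comme $Q_0,\dots,Q_t$ n'ont pas de z\'ero commun dans $\bP^t(\bC)$, la propri\'et\'e (1) donne $R(\uu_0,\dots,\uu_t)=\Res_\uD(Q_0,\dots,Q_t)\neq 0$. Le point central de l'argument --- et \`a peu pr\`es la seule chose \`a imaginer --- est le choix d'une perturbation $(\epsilon_0,\dots,\epsilon_t)$ de petite norme qui annule $R$\ : on ferait en sorte que le point $(1:\xi_1:\cdots:\xi_t)$ de $\bP^t(\bC)$ devienne un z\'ero commun des polyn\^omes perturb\'es en posant simplement $E_i=-\delta_i X_0^{D_i}$, qui est homog\`ene de degr\'e $D_i$ (c'est ici que sert l'hypoth\`ese $D_i\ge 1$). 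En effet $E_i(1,\xi_1,\dots,\xi_t)=-\delta_i$, donc $(Q_i+E_i)(1,\xi_1,\dots,\xi_t)=0$ pour tout $i$\ ; le vecteur $\epsilon_i\in\bC^{n_i}$ des coefficients de $E_i$ n'a qu'une seule coordonn\'ee non nulle, \'egale \`a $-\delta_i$, d'o\`u $\|\epsilon_i\|=|\delta_i|$\ ; et la propri\'et\'e (1) donne $R(\uu_0+\epsilon_0,\dots,\uu_t+\epsilon_t)=\Res_\uD(Q_0+E_0,\dots,Q_t+E_t)=0$. (En particulier, l'un des $\delta_i$ est alors n\'ecessairement non nul, sans quoi $(1:\xi_1:\cdots:\xi_t)$ serait d\'ej\`a un z\'ero commun des $Q_i$.)

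Les hypoth\`eses de la proposition \ref{propU} \'etant ainsi remplies, celle-ci fournirait directement
\[
 1 \le 2^{N_0+\cdots+N_t}\,L(R)^d \left(\prod_{i=0}^t H(Q_i)^{dD_0\cdots D_t/D_i}\right)\max_{0\le i\le t}\frac{|\delta_i|}{\|Q_i\|}.
\]
Il ne resterait qu'\`a mettre en forme le coefficient num\'erique\ : comme $D_i\ge 1$ pour tout $i$, on a $N_0+\cdots+N_t=\sum_{i=0}^t D_0\cdots D_t/D_i\le (t+1)D_0\cdots D_t$, et comme $2\le t+1\le (t+1)^d$ il vient $2^{N_0+\cdots+N_t}\le (t+1)^{d(t+1)D_0\cdots D_t}$\ ; en multipliant par $L(R)^d\le (t+1)^{3d(t+1)D_0\cdots D_t}$ on obtiendrait $2^{N_0+\cdots+N_t}L(R)^d\le (t+1)^{4d(t+1)D_0\cdots D_t}$, soit exactement le coefficient de l'\'enonc\'e. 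La seule \'etape demandant un peu d'invention est le choix de la perturbation ci-dessus\ ; au-del\`a de ce choix, tout est m\'ecanique, le seul point \`a surveiller \'etant cette arithm\'etique des exposants, qui repose sur la borne $3(t+1)$ de la propri\'et\'e (4) --- l'estimation plus grossi\`ere $4(t+1)$ signal\'ee juste apr\`es sa preuve ne suffirait pas \`a produire le coefficient annonc\'e.
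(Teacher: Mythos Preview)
Your proposal is correct and follows exactly the paper's own proof: one applies Proposition~\ref{propU} to the resultant $R=\Res_\uD$ with $\uu_i$ the coefficient vector of $Q_i$ and $\epsilon_i$ the coefficient vector of $-\delta_i X_0^{D_i}$, so that the perturbed polynomials share the common zero $(1:\xi_1:\cdots:\xi_t)$. You have even spelled out the numerical bookkeeping (the bound on $2^{N_0+\cdots+N_t}L(R)^d$) that the paper leaves implicit.
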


\begin{proof}[D\'emonstration]
On a
\[
 \Res_\uD(Q_0,\dots,Q_t)\neq 0 \et \Res_\uD(Q_0-\delta_0X_0^{D_0},\dots,Q_t-\delta_tX_0^{D_t})=0
\]
puisque $Q_0,\dots,Q_t$ n'ont pas de z\'ero commun dans $\bP^t(\bC)$ tandis que, pour $i=0,\dots,t$, les polyn\^omes $Q_i-\delta_iX_0^{D_i}\in \bC[\usX]_{D_i}$ s'annulent tous au point $(1:\xi_1:\cdots:\xi_t)\in \bP^t(\bC)$.  La conclusion suit en appliquant la proposition \ref{propU} avec pour $R$ le polyn\^ome sous-jacent \`a $\Res_\uD$ et $N_i=D_0\cdots D_t/D_i$ pour $i=0,\dots,t$.
\end{proof}

En sp\'ecialisant encore davantage, on en d\'eduit le r\'esultat suivant adapt\'e sp\'ecifiquement \`a notre objet o\`u, rappelons-le, $\cO_K$ d\'esigne l'anneau des entiers de $K$.

\begin{corollaire}
\label{corRes2}
Soient $1\le D\le S$ des entiers, soit $\cF$ un sous-ensemble fini de polyn\^omes homog\`enes de $\cO_K[\usX]$ de degr\'e $S$ sans z\'ero commun dans $\bP^t(\bC)$, soit $P$ un polyn\^ome homog\`ene non nul de $K[\usX]$ de degr\'e $D$, et soit $(\xi_1,\dots,\xi_t)\in\bC^t$.  On se donne des nombres r\'eels positifs $B$ et $\delta$ tels que $\|Q^\sigma\|\le B$ et $|Q(1,\xi_1,\dots,\xi_t)|\le \delta$ pour tout $Q\in\cF$ et tout plongement $\sigma$ de $K$ dans $\bC$.  Alors on a
\[
 1 \le H(P)^{dS^t} \big( (t+1)^{8S}S^{2t}B\big)^{dtDS^{t-1}}
        \max\left\{\frac{\delta}{B},\,\frac{|P(1,\xi_1,\dots,\xi_t)|}{\|P\|} \right\}.
\]
\end{corollaire}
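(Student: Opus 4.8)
The plan is to deduce this from Corollary~\ref{corRes1}, applied to $(Q_0,\dots,Q_t)=(P,Q_1,\dots,Q_t)$ of respective degrees $(D,S,\dots,S)$, where $Q_1,\dots,Q_t$ are integer linear combinations, with small coefficients, of the elements of $\cF$, chosen so that $P,Q_1,\dots,Q_t$ have no common zero in $\bP^t(\bC)$ (so that their resultant is nonzero and Corollary~\ref{corRes1} applies). Throughout, $Z(F)$ denotes the zero locus in $\bP^t(\bC)$ of a homogeneous form $F$.

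First I would construct the $Q_i$. Write $\cF=\{F_1,\dots,F_r\}$ and build the $Q_i$ one at a time so that $V_{i-1}:=Z(P)\cap Z(Q_1)\cap\cdots\cap Z(Q_{i-1})$ loses one dimension at each step. If $V_{i-1}$ is nonempty it has dimension $t-i$, and, being the intersection of the $i$ hypersurfaces $Z(P),Z(Q_1),\dots,Z(Q_{i-1})$ of degrees $D,S,\dots,S$, it has, by the B\'ezout inequality, degree --- hence number of irreducible components --- at most $DS^{i-1}\le DS^{t-1}$. Picking one point $P_\ell$ on each component and using that $(F_1(P_\ell),\dots,F_r(P_\ell))\neq 0$ (because $\cF$ has no common zero), a greedy construction --- adjoining one coordinate at a time, each step forced to dodge at most one forbidden value per point already treated --- yields integers $\lambda_{i,1},\dots,\lambda_{i,r}$ with $\sum_j|\lambda_{ij}|\le(DS^{t-1})^2\le S^{2t}$ (using $D\le S$) such that $Q_i:=\sum_j\lambda_{ij}F_j$ is nonzero at every $P_\ell$, and so vanishes identically on no component of $V_{i-1}$; thus $\dim V_i\le t-i-1$. (Should some $V_{i-1}$ already be empty, I take the remaining polynomials to be a fixed element of $\cF$.) In all cases $P,Q_1,\dots,Q_t$ are nonzero homogeneous forms of degrees $D,S,\dots,S$, lie in $\cO_K[\usX]$, have no common zero in $\bP^t(\bC)$, and, since $\sum_j|\lambda_{ij}|\le S^{2t}$, satisfy $\|Q_i^\sigma\|\le S^{2t}B$ for every embedding $\sigma$ and $|Q_i(1,\xi_1,\dots,\xi_t)|\le S^{2t}\delta$, for $i\ge1$.

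Next I would feed this into Corollary~\ref{corRes1} with $\uD=(D,S,\dots,S)$, so that $D_0\cdots D_t=DS^t$ and the exponents that occur are $H(P)^{dS^t}$ and $H(Q_i)^{dDS^{t-1}}$ for $i\ge1$. Two elementary remarks on heights are used: since $Q_i\in\cO_K[\usX]$ its finite height is at most $1$, so $H(Q_i)\le\max_\sigma\|Q_i^\sigma\|\le S^{2t}B$; and, writing $\sigma_0$ for the inclusion $K\subseteq\bC$, the same observation gives $H(Q_i)^d/\|Q_i\|\le\prod_{\sigma\neq\sigma_0}\|Q_i^\sigma\|\le(S^{2t}B)^{d-1}$, whence $H(Q_i)^{dDS^{t-1}}/\|Q_i\|\le(S^{2t}B)^{dDS^{t-1}-1}$ (using $DS^{t-1}\ge1$). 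In the inequality of Corollary~\ref{corRes1}, if the maximum is attained at the index~$0$, inserting $H(Q_i)\le S^{2t}B$ gives at once $1\le H(P)^{dS^t}(t+1)^{4d(t+1)DS^t}(S^{2t}B)^{tdDS^{t-1}}\,|P(1,\xi_1,\dots,\xi_t)|/\|P\|$; if it is attained at some $k\ge1$, isolating the factor $H(Q_k)^{dDS^{t-1}}/\|Q_k\|$, bounding the remaining factors $H(Q_i)^{dDS^{t-1}}$ by $(S^{2t}B)^{dDS^{t-1}}$, and using $|Q_k(1,\xi_1,\dots,\xi_t)|\le S^{2t}\delta$, gives $1\le H(P)^{dS^t}(t+1)^{4d(t+1)DS^t}(S^{2t}B)^{tdDS^{t-1}}\,\delta/B$. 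Finally $4(t+1)\le 8t$ (for $t\ge1$) yields $(t+1)^{4d(t+1)DS^t}(S^{2t}B)^{tdDS^{t-1}}\le\big((t+1)^{8S}S^{2t}B\big)^{dtDS^{t-1}}$, and combining the two cases gives exactly the stated inequality.

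The step I expect to be the main obstacle is the construction of the $Q_i$: the coefficients of the combinations must be kept small \emph{independently of the number of elements of $\cF$}. The way around this is to control, not $\cF$ itself, but the number of irreducible components of the successive intersections $V_{i-1}$ through the B\'ezout inequality, and to keep the combinations sparse by the greedy choice above. Once this is done the second step is routine bookkeeping with heights, the one point worth isolating being the cancellation of the stray factor $1/\|Q_k\|$ against one power $H(Q_k)^d$, which relies on $Q_k$ having algebraic \emph{integer} coefficients.
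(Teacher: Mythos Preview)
Your proof is correct and follows essentially the same route as the paper's: the same Brownawell--Masser/Philippon construction of $Q_1,\dots,Q_t$ as short integer combinations of elements of $\cF$ (with the component count controlled by B\'ezout), the same height estimate $H(Q_i)^d\le (S^{2t}B)^{d-1}\|Q_i\|$ exploiting that $Q_i\in\cO_K[\usX]$, and the same final simplification $4(t+1)\le 8t$. The paper compresses your case split on where the maximum is attained into the single line $\max\{S^{2t}\delta/(S^{2t}B),\,|P(1,\xi_1,\dots,\xi_t)|/\|P\|\}$, but the underlying cancellation of $\|Q_k\|$ against one factor $H(Q_k)^d$ is identical.
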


\begin{proof}[D\'emonstration]
Une construction g\'eom\'etrique d\^ue \`a W.~D.~Brownawell et D.~W.~Masser \cite{BM}, reprise par le lemme 1.9 de \cite{Ph2}, montre que, pour $i=1,\dots,t$, il existe une combinaison lin\'eaire $Q_i$ d'au plus $DS^{i-1}$ \'el\'ements de $\cF$, \`a coefficients entiers de valeur absolue au plus $DS^{i-1}$, telle que, pour $i=0,\dots,t$, la vari\'et\'e $V_i$ des z\'eros communs de $P,Q_1,\dots,Q_i$ dans $\bP^t(\bC)$ soit \'equidimensionnelle de dimension $t-i-1$ et, si $i\neq t$, de degr\'e au plus $DS^i$.  Pour l'\'etablir, on proc\`ede par r\'ecurrence sur $i$ en notant que, pour $i=0$, cette condition est remplie puisque $P$ est non nul de degr\'e $D$. Par ailleurs, si cette condition est satisfaite pour un entier $i$ avec $0\le i<t$ et un choix appropri\'e de $Q_1,\dots,Q_i$, alors $V_i$ poss\`ede au plus $DS^i$ composantes irr\'eductibles $W_1,\dots,W_k$ toutes de m\^eme dimension $t-i-1$.  Pour chaque $j=1,\dots,k$, on choisit un point $\uw_j$ de $W_j$, puis un \'el\'ement $F_j$ de $\cF$ tel que $F_j(\uw_j)\neq 0$.  Alors il existe une combinaison lin\'eaire $Q_{i+1}=m_1F_1+\cdots+m_kF_k$ avec $m_1,\dots,m_k$ entiers, en valeur absolue au plus $DS^i$, telle que $Q_{i+1}(\uw_j)\neq 0$ pour $j=1,\dots,k$.  Pour ce choix de $Q_{i+1}$, la vari\'et\'e $V_{i+1}$ intersection de $V_i$ et de l'hypersurface $Q_{i+1}=0$ est \'equidimensionnelle de dimension $t-i-2$ et, si $i+1\neq t$, de degr\'e au plus $\deg(V_i)S \le DS^{i+1}$ comme requis.  En particulier, pour $i=t$, cela signifie que $P,Q_1,\dots,Q_t$ n'ont aucun z\'ero commun dans $\bP^t(\bC)$.

Pour tout $i=1,\dots,t$ et tout plongement $\sigma$ de $K$ dans $\bC$, on a
\[
 \|Q_i^\sigma\|\le S^{2t}B \et |Q_i(1,\xi_1,\dots,\xi_t)|\le S^{2t}\delta
\]
car $D\le S$.  Comme chacun des $Q_i$ est \`a coefficients dans $\cO_K$, on en d\'eduit
\[
 H(Q_i)^d \le H_\infty(Q_i)^d = \prod_\sigma \|Q_i^\sigma\| \le (S^{2t}B)^{d-1}\|Q_i\| \le (S^{2t}B)^d \quad (1\le i\le t).
\]
Alors le corollaire \ref{corRes1} livre
\[
 1 \le (t+1)^{4d(t+1)DS^t}
        H(P)^{dS^t} (S^{2t}B)^{dtDS^{t-1}}
        \max\left\{ \frac{S^{2t}\delta}{S^{2t}B},\,\frac{|P(1,\xi_1,\dots,\xi_t)|}{\|P\|} \right\}
\]
et la conclusion suit en majorant le premier facteur par $(t+1)^{8dtDS^t}$.
\end{proof}

%
%

\section{Lemme de z\'eros}
\label{sec:LZ}

On reprend les notations du paragraphe \ref{sec:Fa} avec $S,T\ge 1$ fix\'es.  On pose aussi
\[
 \uX = (X_1,\dots,X_t)
 \et
 \usX = (X_0,\dots,X_t)
\]
o\`u $X_0,\dots,X_t$ sont des ind\'etermin\'ees sur $\bC$.  Enfin, pour tout $\un=(n_1,\dots,n_t)\in\bN^t$ et tout $\uz=(z_1,\dots,z_t)\in \bC^t$, on d\'efinit
\[
 \uX^\un = X_1^{n_1}\cdots X_t^{n_t}
 \et
 \uz^\un = z_1^{n_1}\cdots z_t^{n_t}.
\]
Alors, pour $\ell\in\{0,1,\dots,T-1\}$ et $\un\in\Sigma(S+1)$, la premi\`ere formule du corollaire \ref{Fa:cor:g} montre que
\begin{equation}
\label{LZ:eq:gQ}
 g^{(\ell)}(\nalpha)=Q_{\un,\ell}(1,e^{\alpha_1},\dots,e^{\alpha_t})
\end{equation}
o\`u
\begin{equation}
\label{LZ:eq:Q}
 Q_{\un,\ell}(\usX)
 = X_0^{S-|\un|}\uX^{\un}
  - \sum_\mj A_{\um,j}^{(\ell)}(\nalpha) X_0^{S-|\um|}\uX^\um
\end{equation}
est un polyn\^ome homog\`ene de $\bC[\usX]$ de degr\'e $S$.  Le but de ce paragraphe est de d\'emontrer le r\'esultat suivant qui nous permettra par la suite d'appliquer le corollaire \ref{corRes2}.

\begin{proposition}
\label{propLZ}
Supposons $T\ge 2$.  Alors les polyn\^omes $Q_{\un,\ell}$ avec $\un\in \Sigma(S+1)$ et $0\le \ell<T$ n'ont pas de z\'ero commun dans $\bP^t(\bC)$.
\end{proposition}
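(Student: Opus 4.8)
The plan is to suppose that the polynomials $Q_{\un,\ell}$ with $\un\in\Sigma(S+1)$ and $0\le\ell<T$ have a common zero $(\xi_0:\xi_1:\cdots:\xi_t)$ in $\bP^t(\bC)$ and to reach a contradiction. First dispose of the case $\xi_0=0$: in \eqref{LZ:eq:Q}, for $\un$ with $|\un|=S$ the monomial $X_0^{S-|\un|}\uX^\un$ equals $\uX^\un$, while every monomial $X_0^{S-|\um|}\uX^\um$ occurring in the sum has $\um\in\Sigma(S)$, hence $|\um|<S$ and a positive power of $X_0$; evaluating at $(0,\xi_1,\dots,\xi_t)$ therefore gives $Q_{\un,0}(0,\xi_1,\dots,\xi_t)=\xi_1^{n_1}\cdots\xi_t^{n_t}$. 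Choosing $\un=S\ue_i$ for an index $i$ with $\xi_i\neq 0$ --- which exists because the $\xi_i$ are not all zero and $\xi_0=0$ --- this value is nonzero, a contradiction.

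So I may assume $\xi_0\neq 0$ and, after rescaling, $\xi_0=1$. Put $\boldsymbol{\xi}=(\xi_1,\dots,\xi_t)$ and $\boldsymbol{\xi}^{\um}=\xi_1^{m_1}\cdots\xi_t^{m_t}$, and introduce the polynomial
\[
 P(x)=\sum_\mj \boldsymbol{\xi}^{\um}\,A_{\um,j}(x)\in\bC[x]
\]
of degree $<NT$, with $N=|\Sigma(S)|$ as in Section \ref{sec:Fa}. By \eqref{Fa:eq:A} one has $P^{(\ell)}(\nalpha)=\boldsymbol{\xi}^{\un}$ for all $\un\in\Sigma(S)$ and $0\le\ell<T$; moreover, substituting $(1,\xi_1,\dots,\xi_t)$ into \eqref{LZ:eq:Q} turns the hypothesis $Q_{\un,\ell}(1,\xi_1,\dots,\xi_t)=0$ into exactly the identity $P^{(\ell)}(\nalpha)=\boldsymbol{\xi}^{\un}$, so this identity now holds for every $\un\in\Sigma(S+1)$ and $0\le\ell<T$.

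The heart of the proof is then to fix $i\in\{1,\dots,t\}$ and consider $D_i(x)=P(x+\alpha_i)-\xi_i\,P(x)$, still of degree $<NT$. For $\un\in\Sigma(S)$ one has $\un+\ue_i\in\Sigma(S+1)$, so for every $0\le\ell<T$,
\[
 D_i^{(\ell)}(\nalpha)=P^{(\ell)}\big((\un+\ue_i)\cdot\ualpha\big)-\xi_i\,P^{(\ell)}(\nalpha)=\boldsymbol{\xi}^{\un+\ue_i}-\xi_i\,\boldsymbol{\xi}^{\un}=0.
\]
Since the $N$ points $\nalpha$ with $\un\in\Sigma(S)$ are pairwise distinct, by the linear independence of $\alpha_1,\dots,\alpha_t$, the polynomial $D_i$ vanishes with total multiplicity at least $NT$ while $\deg D_i<NT$; hence $D_i\equiv 0$, that is, $P(x+\alpha_i)=\xi_i\,P(x)$ for each $i=1,\dots,t$.

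To conclude, note that $P(0)=1$ by \eqref{Fa:eq:A} (take $\un=\mathbf{0}$, $\ell=0$), so $P$ is a nonzero polynomial, and comparing leading coefficients in $P(x+\alpha_i)=\xi_i\,P(x)$ forces $\xi_i=1$ for every $i$. Then $P$ admits the nonzero period $\alpha_1$, hence is constant, equal to $P(0)=1$, so $P'\equiv 0$; but \eqref{Fa:eq:A} with $\un=\mathbf{0}$ and $\ell=j=1$ gives $P'(0)=1$. This is the desired contradiction, and it is exactly here that the hypothesis $T\ge 2$ is used, ensuring that $j=1$ is an admissible index. I expect the only step requiring a little care to be the multiplicity count for $D_i$ --- one must observe that forcing $D_i$ to vanish to order $T$ at each of the $N$ points of $E$ already saturates its degree bound and so makes $D_i$ vanish identically --- the remaining verifications being routine manipulations of \eqref{Fa:eq:A} and \eqref{LZ:eq:Q}.
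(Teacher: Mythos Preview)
Your proof is correct and follows essentially the same route as the paper's: reduce to $\xi_0=1$, form the interpolation polynomial $P(x)=\sum_{\um,j}\boldsymbol{\xi}^{\um}A_{\um,j}(x)$, use the common-zero hypothesis to get $P^{(\ell)}(\nalpha)=\boldsymbol{\xi}^{\un}$ on $\Sigma(S+1)$, deduce the functional equations $P(x+\alpha_i)=\xi_iP(x)$ by a degree count, and finish with the observation that a nonconstant polynomial has no nonzero period. The only cosmetic difference is in the endgame: the paper notes that $P^{(\ell)}(0)=1$ for all $0\le\ell<T$ forces $\deg P\ge T-1>0$ up front, whereas you first extract $\xi_i=1$ from leading coefficients and then invoke $P'(0)=1$ to contradict constancy; both use $T\ge 2$ in the same way.
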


On pourrait d\'emontrer un r\'esultat de ce type en employant, pour le groupe $\Ga\times\Gm$, le formidable lemme de z\'eros de P.~Philippon \cite{Ph3} comme le fait M.~Ably dans \cite{Ably} mais il faudrait encore tra\^{\i}ter s\'epar\'ement les points ``\`a l'infini''.  La d\'emonstration ci-dessous est purement \'el\'ementaire et \'evite cette difficult\'e.

\begin{proof}[D\'emonstration]
Supposons au contraire qu'ils admettent un z\'ero commun $(z_0:\cdots:z_t)\in\bP^t(\bC)$ et posons $\uz=(z_1,\dots,z_t)\in \bC^t$.  Si $z_0=0$, on obtient aussit\^ot $\uz^\un=0$ pour tout $\un\in \Sigma(S+1) \setminus \Sigma(S)$, mais alors $z_1=\cdots=z_t=0$, ce qui est impossible.  Donc on doit avoir $z_0\neq 0$, ce qui permet de supposer que $z_0=1$.  Alors, en termes du polyn\^ome
\[
 p(x) := \sum_\mj A_{\um,j}(x)\uz^\um \in \bC[x],
\]
l'hypoth\`ese devient
\begin{equation}
 \label{propLZ:eq1}
 \uz^\un = p^{(\ell)}(\nalpha) \quad (\un\in\Sigma(S+1),\, 0\le \ell<T).
\end{equation}
Pour $i=1,\dots,t$, on en d\'eduit que
\[
 p^{(\ell)}(\malpha+\alpha_i)
 = z_i\uz^\um
 = z_i p^{(\ell)}(\malpha) \quad ( \um\in\Sigma(S),\ 0\le \ell <T),
\]
et par suite
\begin{equation}
 \label{propLZ:eq2}
 p(x+\alpha_i) = z_ip(x)
\end{equation}
puisque $p(x)$ est de degr\'e inf\'erieur \`a $T\,|\Sigma(S)|$. Or, pour $\un=0$, les relations \eqref{propLZ:eq1} signifient que $p^{(\ell)}(0)=1$ $(0\le \ell<T)$.  Donc, $p(x)$ est un polyn\^ome non nul de degr\'e au moins $T-1>0$ et par suite la relation \eqref{propLZ:eq2} n'est possible que si $z_i=1$ et $\alpha_i=0$ pour $i=1,\dots,t$.  Comme aucun des $\alpha_i$ n'est nul, c'est la contradiction cherch\'ee.
\end{proof}

%
%

\section{Estimations et conclusion}
\label{sec:Est}

On poursuit avec les notations des paragraphes \ref{sec:Fa} et \ref{sec:LZ}. Pour des entiers $S,T\ge 1$ fix\'es, on pose
\[
 F:=\{\ur\in\bZ^t\,;\, \ur\neq 0 \text{ et } |\ur|< S\}
 \et
 \Delta := (T-1)!\  q^{2T|F|+T} \prod_{\ur\in F} (\ralpha)^{2T}.
\]
Le but de ce paragraphe est, dans un premier temps, de montrer que les polyn\^omes $\Delta Q_{\un,\ell}$ avec $\un\in\Sigma(S+1)$ et $0\le \ell<T$ sont \`a coefficients dans $\cO_K$ et de majorer les normes de leurs  conjugu\'es.  Ensuite, on majorera leurs valeurs absolues au point $(1,e^{\alpha_1},\dots,e^{\alpha_t})$.  En appliquant le corollaire \ref{corRes2}, on sera alors en mesure de d\'emontrer le th\'eor\`eme \ref{thm:eff} sous une forme l\'eg\`erement plus g\'en\'erale.

Auparavant, on note que tout point de $F$ s'\'ecrit sous la forme $(\pm m_1,\dots,\pm m_t)$ pour un choix de $(m_1,\dots,m_t)\in \Sigma(S)\setminus\{0\}$.  Par cons\'equent, la cardinalit\'e $|F|$ de $F$ est au plus $2^t(N-1)$ o\`u $N=|\Sigma(S)|$.  Dans les calculs qui suivent, on utilisera aussi le fait que $cq\ge 1$.  Cela d\'ecoule du fait que tout \'el\'ement non nul de $\cO_K$ poss\`ede au moins un conjugu\'e de valeur absolue $\ge 1$ et les conjugu\'es de $q\alpha_1,\dots,q\alpha_t\in\cO_K$ sont tous de valeur absolue $\le cq$.

\begin{lemme}
\label{lemA}
Soient $\um\in\Sigma(S)$, $\un\in \Sigma(S+1)$ et $j,\ell\in\{0,1,\dots,T-1\}$.  Alors, les nombres $\Delta$ et $\Delta A_{\um,j}^{(\ell)}(\nalpha)$ sont des \'el\'ements de $\bZ[q\ualpha]\subseteq \cO_K$ dont les conjugu\'es ont leur valeur absolue major\'ee par
\[
 B_0 := (NT)^{2T-2}(cqS)^{2^{t+1}NT}.
\]
\end{lemme}

\begin{proof}[D\'emonstration]
En posant $E'=\Sigma(S)\setminus\{\um\}$, le polyn\^ome $A_{\um,j}(x)$ s'\'ecrit
\[
 A_{\um,j}(x) = \left(\prod_{\um'\in E'}(x-\um'\cdot\ualpha)^T\right) P_{\um,j}(x-\malpha)
\]
o\`u $P_{\um,j}$ d\'esigne le polyn\^ome de degr\'e inf\'erieur \`a $T$ pour lequel
\[
 A_{\um,j}(x)\equiv \frac{1}{j!}(x-\malpha)^j \mod (x-\malpha)^T.
\]
En rempla\c{c}ant $x$ par $x+\malpha$, cette derni\`ere condition devient
\[
 \left(\prod_{\um'\in E'}(x-(\um'-\um)\cdot\ualpha)^T\right) P_{\um,j}(x)
 \equiv \frac{x^j}{j!} \mod x^T.
\]
Ainsi, dans l'anneau des s\'eries formelles $\bC[[x]]$, on a
\begin{align*}
 P_{\um,j}(x)
  &\equiv \frac{x^j}{j!} \prod_{\um'\in E'} \left(
    \big((\um-\um')\cdot\ualpha\big)^{-T}
    \left(1-\frac{x}{(\um'-\um)\cdot\ualpha} \right)^{-T} \right) \mod x^T \\
  &\equiv \frac{x^j}{c_{\um,j}} \prod_{\um'\in E'} \left(
    \sum_{i=0}^\infty \frac{x^i}{((\um'-\um)\cdot\ualpha)^i} \right)^T  \mod x^T,
\end{align*}
o\`u $c_{\um,j}=j!\prod_{\um'\in E'} \big((\um-\um')\cdot\ualpha\big)^T$.  En d\'eveloppant cette expression, on en d\'eduit que $P_{\um,j}(x)$ est une somme de $\binom{T|E'|+T-j-1}{T-j-1}$ produits de la forme
\[
 \frac{x^{i+j}}{c_{\um,j}((\um'_1-\um)\cdot\ualpha)\cdots((\um'_i-\um)\cdot\ualpha)}
\]
o\`u $i$ est un entier avec $0\le i\le T-j-1$ et o\`u $\um'_1,\dots,\um'_i$ sont des \'el\'ements de $E'$ non n\'ecessairement distincts.   Par suite, $A_{\um,j}(x)$ est la somme des produits
\[
  \frac{1}{j!}\left( \prod_{\um'\in E'}\frac{x-\um'\cdot\ualpha}{(\um-\um')\cdot\ualpha} \right)^T \frac{(x-\malpha)^{i+j}}{((\um'_1-\um)\cdot\ualpha)\cdots((\um'_i-\um)\cdot\ualpha)}
\]
pour les m\^emes choix de $i$ et de $\um'_1,\dots,\um'_i$.  Puisque $\um-E'$ et $E'-\um$ sont des sous-ensembles de $F$, ces produits s'\'ecrivent encore sous la forme
\begin{equation}
 \label{propLZ:eq3}
 \frac{(x-\um_1\cdot\ualpha) \cdots (x-\um_k\cdot\ualpha)}
      {j!(\ur_1\cdot\ualpha) \cdots (\ur_{k-j}\cdot\ualpha)}
\end{equation}
o\`u $k$ est un entier avec $j+T|E'| \le k < T(|E'|+1)=NT$, o\`u $\um_1,\dots,\um_k$ sont des \'el\'ements de $E=\Sigma(S)$, et o\`u $\ur_1,\dots,\ur_{k-j}$ sont des \'el\'ements de $F$, chacun de ces derniers \'etant r\'ep\'et\'e au plus $2T$ fois.

En vertu de ce qui pr\'ec\`ede, la d\'eriv\'ee $A_{\um,j}^{(\ell)}(x)$ est une somme de produits semblables aux produits \eqref{propLZ:eq3} mais avec $\ell$ facteurs en moins au num\'erateur.  De plus, le nombre de ces produits ne d\'epasse pas
\[
 (NT)^\ell \binom{T|E'|+T-j-1}{T-j-1}
 \le \frac{(NT)^{2(T-1)}}{(T-1)!}\,.
\]
Alors $\Delta A_{\um,j}^{(\ell)}(\nalpha)$ est une somme d'autant de produits de la forme
\[
 \frac{(T-1)!}{j!} q^{2T|F|+T} (\ur_1\cdot\ualpha) \cdots (\ur_k\cdot\ualpha)
\]
o\`u cette fois $k$ d\'esigne un entier au plus \'egal \`a $2T|F|+T$ et o\`u $\ur_1,\dots,\ur_k$ sont des \'el\'ements de $\bZ^t$ de longueur $|\ur_i| \le S$ $(1\le i\le k)$.  On conclut que $\Delta A_{\um,j}^{(\ell)}(\nalpha)$ est un \'el\'ement de $\bZ[q\ualpha]$ et que, pour tout plongement $\sigma$ de $K$ dans $\bC$, on a
\[
 \left|\left(\Delta A_{\um,j}^{(\ell)}(\nalpha)\right)^\sigma\right|
 \le (NT)^{2(T-1)} (cqS)^{2T|F|+T}.
\]
Comme cette majoration s'applique aussi \`a $|\Delta^\sigma|$, la conclusion suit en notant que
$|F|\le 2^t(N-1)$.
\end{proof}

\begin{proposition}
\label{propQ}
Supposons $N\ge 6$ et $NT\ge 2ecqS$.  Soient $\un\in\Sigma(S+1)$ et $\ell\in\{0,1,\dots,T-1\}$.  Alors, $\Delta Q_{\un,\ell}$ est un polyn\^ome homog\`ene de $\cO_K[\usX]$ de degr\'e $S$ qui satisfait, pour tout plongement $\sigma$ de $K$ dans $\bC$,
\[
 \|\Delta^\sigma Q_{\un,\ell}^\sigma\| \le TB_0
 \et
 |\Delta Q_{\un,\ell}(1,e^{\alpha_1},\dots,e^{\alpha_t})| \le \left(\frac{cqS}{T}\right)^{NT} T^T B_0.
\]
\end{proposition}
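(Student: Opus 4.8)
Le plan est de démontrer successivement les trois conclusions de l'énoncé. Les deux premières — que $\Delta Q_{\un,\ell}$ appartient à $\cO_K[\usX]$ et y est homogène de degré $S$, et que $\|\Delta^\sigma Q_{\un,\ell}^\sigma\|\le TB_0$ pour tout plongement $\sigma$ de $K$ dans $\bC$ — sont de nature combinatoire, ne mettant en jeu que la forme explicite \eqref{LZ:eq:Q} de $Q_{\un,\ell}$ et le lemme \ref{lemA}; elles ne font pas appel aux hypothèses $N\ge 6$ et $NT\ge 2ecqS$. La troisième, la majoration de $\Delta Q_{\un,\ell}$ en $(1,e^{\alpha_1},\dots,e^{\alpha_t})$, est analytique et reposera sur la série du corollaire \ref{Fa:cor:g}.

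Pour les deux premières conclusions, on partira de \eqref{LZ:eq:Q}: le polynôme $Q_{\un,\ell}$ est homogène de degré $S$, et le coefficient du monôme $X_0^{S-|\um|}\uX^\um$ y vaut $\epsilon_\um-\sum_{j=0}^{T-1}A_{\um,j}^{(\ell)}(\nalpha)$, où $\epsilon_\um=1$ si $\um=\un$, $\epsilon_\um=0$ sinon, le second terme n'intervenant que pour $\um\in\Sigma(S)$. Si $\un\in\Sigma(S)$, la relation \eqref{Fa:eq:A} donne $\sum_{j=0}^{T-1}A_{\um,j}^{(\ell)}(\nalpha)=\epsilon_\um$ pour tout $\um\in\Sigma(S)$, donc $Q_{\un,\ell}=0$ et les conclusions sont triviales; si $|\un|=S$, le coefficient de $\uX^\un$ vaut $1$ et ceux des $X_0^{S-|\um|}\uX^\um$ avec $\um\in\Sigma(S)$ valent $-\sum_{j=0}^{T-1}A_{\um,j}^{(\ell)}(\nalpha)$. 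Ainsi, chaque coefficient de $\Delta Q_{\un,\ell}$ est nul, égal à $\Delta$, ou une somme d'au plus $T$ nombres de la forme $\pm\Delta A_{\um,j}^{(\ell)}(\nalpha)$. Comme, d'après le lemme \ref{lemA}, $\Delta$ et les $\Delta A_{\um,j}^{(\ell)}(\nalpha)$ sont dans $\bZ[q\ualpha]\subseteq\cO_K$ et ont tous leurs conjugués de valeur absolue $\le B_0$, on en tirera que $\Delta Q_{\un,\ell}\in\cO_K[\usX]$ est homogène de degré $S$ et que $\|\Delta^\sigma Q_{\un,\ell}^\sigma\|\le TB_0$ pour tout $\sigma$.

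Pour la troisième, on utilisera \eqref{LZ:eq:gQ} et la seconde formule du corollaire \ref{Fa:cor:g}:
\[
 \Delta Q_{\un,\ell}(1,e^{\alpha_1},\dots,e^{\alpha_t})=\Delta\,g^{(\ell)}(\nalpha)=\sum_{k=NT}^\infty\frac{1}{k!}\bigg(\Delta\,k^{(\ell)}(\nalpha)^{k-\ell}-\sum_\mj k^{(j)}(\malpha)^{k-j}\,\Delta A_{\um,j}^{(\ell)}(\nalpha)\bigg).
\]
Posant $\rho=cqS\ge 1$, on invoquera: $|\nalpha|,|\malpha|\le cS\le\rho$ (puisque $|\un|,|\um|\le S$ et $|\alpha_i|\le c$); $k^{(j)}\le k^{T-1}$ pour $0\le j<T$; la somme intérieure compte $NT$ termes; et $|\Delta|\le B_0$, $|\Delta A_{\um,j}^{(\ell)}(\nalpha)|\le B_0$ d'après le lemme \ref{lemA}. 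En distribuant $\Delta$, le terme d'indice $k$ sera ainsi majoré en valeur absolue par $2NT\,B_0\,k^{T-1}\rho^k/k!$. L'hypothèse $NT\ge 2ecqS=2e\rho$ donnera, pour $k\ge NT$, un rapport de termes consécutifs de $\sum_k k^{T-1}\rho^k/k!$ égal à $(1+1/k)^{T-1}\rho/(k+1)<e\rho/(NT)\le 1/2$, d'où $\sum_{k\ge NT}k^{T-1}\rho^k/k!\le 2(NT)^{T-1}\rho^{NT}/(NT)!$. Avec $(NT)!\ge(NT/e)^{NT}$, il viendra
\[
 |\Delta Q_{\un,\ell}(1,e^{\alpha_1},\dots,e^{\alpha_t})|\le\frac{4(NT)^T\rho^{NT}}{(NT)!}\,B_0\le 4(NT)^T\Big(\frac{e\rho}{NT}\Big)^{NT}B_0,
\]
et l'on conclura par l'inégalité élémentaire $4(NT)^T(e\rho/NT)^{NT}\le(\rho/T)^{NT}T^T$, équivalente à $4e^{NT}\le N^{T(N-1)}$, c'est-à-dire à $\log 4+NT\le T(N-1)\log N$, satisfaite dès que $N\ge 6$.

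La difficulté est comptable plutôt que conceptuelle: il faut que des majorations délibérément grossières (nombre $NT$ de termes, $k^{(j)}\le k^{T-1}$, passage des $|\ur\cdot\ualpha|$ aux puissances de $cS$ puis de $cqS$) restent compatibles avec la forme précise $(cqS/T)^{NT}T^TB_0$ annoncée; c'est l'hypothèse de taille $NT\ge 2ecqS$ qui procure le levier décisif, aussi bien pour la décroissance géométrique de la série que, avec $N\ge 6$, pour l'inégalité finale. On prendra garde aussi au cas dégénéré $\un\in\Sigma(S)$, où $Q_{\un,\ell}$ est le polynôme nul.
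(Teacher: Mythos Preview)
Your proof is correct and follows essentially the same route as the paper: the first two assertions are read off from \eqref{LZ:eq:Q} and Lemma~\ref{lemA}, and the third comes from the series in Corollary~\ref{Fa:cor:g}, bounding each term by $B_0$ times a crude power of $cqS$ and controlling the tail via $NT\ge 2ecqS$, the final reduction to $(cqS/T)^{NT}T^TB_0$ resting on $N\ge6$. The only cosmetic differences are that you make the degenerate case $\un\in\Sigma(S)$ explicit and sum the tail by a ratio test rather than the paper's direct majorization $(ecqS/k)^k k^T\le(ecqS/(NT))^k(NT)^T$; the resulting numerical inequalities ($4e^{NT}\le N^{T(N-1)}$ for you, $2Ne^{NT}\le N^{T(N-1)}$ in the paper) are equivalent in strength.
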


\begin{proof}[D\'emonstration]
La majoration de norme ainsi que l'assertion qui la pr\'ec\`ede d\'ecoulent directement du lemme pr\'ec\'edent joint \`a la d\'efinition \eqref{LZ:eq:Q} du polyn\^ome $Q_{\un,\ell}$.  Pour \'etablir l'autre majoration, on utilise la seconde formule du Corollaire \ref{Fa:cor:g}
\begin{align*}
 \Delta Q_{\un,\ell}(1,e^{\alpha_1},\dots,e^{\alpha_t})
  &= \Delta g^{(\ell)}(\nalpha) \\
  &= \sum_{k=NT}^\infty \frac{1}{k!}
    \bigg( \Delta k^{(\ell)}(\un\cdot\ualpha)^{k-\ell} - \sum_\mj k^{(j)} (\um\cdot\ualpha)^{k-j}
           \Delta A^{(\ell)}_{\um,j}(\un\cdot\ualpha) \bigg).
\end{align*}
Gr\^ace au lemme pr\'ec\'edent, la valeur absolue de cette quantit\'e est major\'ee par
\[
B_0 \sum_{k=NT}^\infty \frac{1}{k!}
    \bigg( k^{T-1}(cS)^{k-\ell} + N\sum_{j=0}^{T-1} k^j (cS)^{k-j} \bigg)
    \le B_0 \sum_{k=NT}^\infty \frac{(cqS)^k}{k!} Nk^T,
\]
puisque la somme porte sur $k\ge NT$, que $NT\ge 3$, et que $\max\{1,cS\} \le cqS$.  Comme $k!\ge (k/e)^k$ pour tout entier $k\ge 0$, cette derni\`ere quantit\'e est \`a son tour major\'ee par
\begin{align*}
N B_0 \sum_{k=NT}^\infty \left(\frac{ecqS}{k}\right)^k k^T
 &\le N B_0 \sum_{k=NT}^\infty \left(\frac{ecqS}{NT}\right)^k (NT)^T \\
 &\le 2N B_0 \left(\frac{ecqS}{NT}\right)^{NT} (NT)^T &&\text{car $NT\ge 2ecqS$,}\\
 &\le \left(\frac{cqS}{T}\right)^{NT} T^T B_0 &&\text{car $N\ge 6$.}
\end{align*}
\end{proof}

Si $D$, $H$ et $P$ sont comme au th\'eor\`eme \ref{thm:eff}, alors le polyn\^ome homog\`ene $\tP$ de degr\'e $D$ associ\'e \`a $P$ est de hauteur $H(\tP)\le H$ et satisfait par construction $\tP(1, e^{\alpha_1}, \dots, e^{\alpha_t}) = P(e^{\alpha_1}, \dots, e^{\alpha_t})$.  Donc le th\'eor\`eme \ref{thm:eff} d\'ecoule du r\'esultat suivant sur lequel nous concluons ce travail.

\begin{theoreme}
\label{thm:K}
Soit $D\ge 1$ un entier positif et soit $P$ un polyn\^ome homog\`ene non nul de $K[\usX]$ de degr\'e $D$.  Alors, on a
\begin{equation}
\label{thm:K:eq0}
 \rho:=\frac{|P(1,e^{\alpha_1},\dots,e^{\alpha_t})|}{\|P\|}
 \ge H(P)^{-3dS^t}\exp\left(-(cqS)^{18S^t}\right),
\end{equation}
o\`u $S=6dt(t!)D$.
\end{theoreme}

\begin{proof}[D\'emonstration]
Soit $T$ le plus petit entier positif qui satisfasse \`a la fois
\[
 T \ge (cqS)^{2^{t+2}N} \et T^{NT} \ge H(P)^{3dS^t}, \quad\text{o\`u}\quad N=|\Sigma(S)|=\binom{S+t-1}{t}.
\]
En vertu des propositions \ref{propLZ} et \ref{propQ}, les produits $\Delta Q_{\un,\ell}$ avec $\un\in\Sigma(S+1)$ et $0\le \ell<T$ forment une famille $\cF$ de polyn\^omes homog\`enes de $\cO_K[\usX]$ de degr\'e $S$ sans z\'ero commun dans $\bP^t(\bC)$ qui satisfont les conditions du corollaire \ref{corRes2} avec $B=TB_0$ et $\delta=(cqS/T)^{NT} T^T B_0$ o\`u $B_0$ est donn\'e par le lemme \ref{lemA}.  Ce corollaire livre donc
\begin{equation}
\label{thm:K:eq1}
 1 \le H(P)^{dS^t} \Big((t+1)^{8S}S^{2t}TB_0\Big)^{dtDS^{t-1}}
      \max\left\{ \left(\frac{cqS}{T}\right)^{NT} T^T, \ \rho\right\}\,.
\end{equation}
Pour les estimations subs\'equentes, on utilisera
\[
 \frac{S^t}{t!}
  \le N=\frac{S(S+1)\cdots(S+t-1)}{t!}
  \le \frac{(S+t)^t}{t!}
  \le \frac{S^t}{t!}\left(1+\frac{1}{6t}\right)^t
  \le \frac{2S^t}{t!},
\]
qui impliquent plus grossi\`erement $6t\le S \le N\le S^t$ en tenant compte du fait que, pour $t=1$, on a $N=S$. Ainsi, en vertu du choix de $S$, on trouve
\[
 dtDS^{t-1} \le \frac{S^t}{6t!}\le \frac{N}{6}.
\]
On va montrer que
\begin{equation}
\label{thm:K:eq2}
 H(P)^{dS^t} \Big((t+1)^{8S}S^{2t}TB_0\Big)^{N/6}
      \left(\frac{cqS}{T}\right)^{NT} T^T \ < \ 1.
\end{equation}
Si on accepte ce r\'esultat, l'in\'egalit\'e \eqref{thm:K:eq1} entra\^{\i}ne aussit\^ot
\begin{equation}
\label{thm:K:eq3}
 \rho > \left(\frac{cqS}{T}\right)^{NT} T^T \ge \left(\frac{2}{T}\right)^{NT-T} \ge (T-1)^{-N(T-1)},
\end{equation}
car $cqS\ge S\ge 2$ et $T\ge N\ge 2$.  En vertu du choix de $T$, on a par ailleurs
\[
 T-1 \le (cqS)^{2^{t+2}N}\le \exp\big(2^{t+2}N(cqS)\big) \ou (T-1)^{N(T-1)} \le H(P)^{3dS^t}.
\]
Comme $H(P)\ge 1$, on en d\'eduit
\begin{equation}
\label{thm:K:eq4}
 (T-1)^{N(T-1)} \le H(P)^{3dS^t} \exp\Big(2^{t+2}N(cqS)N(cqS)^{2^{t+2}N}\Big).
\end{equation}
Comme $2\le S$, $N\le S^t$ et $S\le cqS$, on a aussi
\begin{align*}
 2^{t+2}N(cqS)N(cqS)^{2^{t+2}N}
 &\le (cqS)^{2^{t+2}N+3t+3}\\
 &\le (cqS)^{(2^{t+2}+t)N} &&\text{car $N\ge 6$,}\\
 &\le (cqS)^{2(2^{t+2}+t)S^t/t!} &&\text{car $N\le 2S^t/t!$,}\\
 &\le (cqS)^{18S^t}.
\end{align*}
En substituant cette estimation dans \eqref{thm:K:eq4} puis le r\'esultat dans \eqref{thm:K:eq3}, on obtient l'in\'egalit\'e annonc\'ee \eqref{thm:K:eq0}.  Il reste donc \`a v\'erifier \eqref{thm:K:eq2}.  Pour ce faire, on utilise d'abord $t+1\le S$, $N\le S^t$ et $S\le cqS$, donc
\begin{align*}
 (t+1)^{8S}S^{2t}TB_0
   &= (t+1)^{8S}S^{2t}T (NT)^{2T-2}(cqS)^{2^{t+1}NT}\\
   &\le T^{2T} (cqS)^{2^{t+1}NT + 8S + 2tT} \\
   &\le T^{2T} (cqS)^{(2^{t+1}+1)NT},
\end{align*}
o\`u la derni\`ere majoration d\'ecoule du fait que $8S+2tT\le 6tT \le NT$.  Comme $N\ge 6$, on en d\'eduit que le membre de gauche de \eqref{thm:K:eq2} est strictement major\'e par
\[
 H(P)^{dS^t} \Big(T^{3T} (cqS)^{(2^{t+1}+2)NT}\Big)^{N/6} T^{-NT}.
\]
En vertu du choix de $T$, ce nombre est \`a son tour major\'e par $T^{NT/3}(T^{4T})^{N/6}T^{-NT}=1$, comme annonc\'e.
\end{proof}

\subsection*{Remerciements}
 Il me fait plaisir de remercier Daniel Bertrand, Michel Laurent et le referee pour leurs suggestions des r\'ef\'erences bibliographiques \cite{An,Be,BR,Ma2,Wu}.  Je tiens aussi \`a t\'emoigner de ma reconnaissance envers Michel Waldschmidt pour son int\'er\^et dans mes travaux, ses constants encouragements, et de si nombreuses discussions depuis plus de 20 ans d\'ej\`a.

%
%

\renewcommand*{\refname}{Bibliographie}

\end{document}